\newtheorem{theorem}{Theorem}
\newtheorem{Remark}{Remark}
\newtheorem{lemma}{Lemma}
\newtheorem{prop}{Proposition}
\newtheorem{alg}{Algorithm}
\def\torus{{\mathbb T}}
\def\real{{\mathbb R}}
\def\dist{{\rm dist}}
\title[Resonant quasi-periodic equilibria]{Resonant equilibrium configurations in quasi-periodic media: {KAM} theory}
\author[R. de la Llave]{Rafael de la Llave}
\address{School of Mathematics\\
Georgia Institute of Technology \\
686 Cherry St. \\
Atlanta GA 30332, USA}
\address{
JLU-GT joint institute for Theoretical Science\\
Jilin University\\
Changchun, 130012, CHINA}
\email{rafael.delallave@math.gatech.edu}
\author[X. Su] {Xifeng Su}
\address{School of Mathematical Sciences\\
Beijing Normal University\\
No. 19, XinJieKouWai St.,HaiDian District\\
 Beijing 100875, P. R. China}
\email{xfsu@bnu.edu.cn}
\author[L. Zhang]{Lei Zhang}
\address{School of Mathematics\\
Georgia Institute of Technology \\
686 Cherry St. \\
Atlanta GA 30332, USA}
\email{lzhang98@math.gatech.edu}
\thanks{L. Z and R. L. supported by NSF grant DMS-1500943} 
\thanks{X. S supported by National Natural Science Foundation of China (Grant No. 11301513) and ``the Fundamental Research Funds for the Central Universities"} 
\begin{document}
\maketitle 
\vspace{0.1in}

\begin{abstract}
We develop an a-posteriori KAM theory for the equilibrium equations for 
quasi-periodic solutions in a quasi-periodic Frenkel-Kontorova 
model when the frequency of the solutions resonates with the 
frequencies of the substratum.

The KAM theory we develop is very different both in the methods
and in the conclusions from the more customary KAM theory for
Hamiltonian systems or from the KAM theory in quasi-periodic media 
for solutions with frequencies which are Diophantine with respect to the frequencies of the media. The main difficulty is that we cannot use transformations (as in the Hamiltonian case) nor Ward identities
 (as in the case of frequencies Diophantine with those of the media).

The technique we use is to add an extra equation that ensures the linearization of the equilibrium equation factorizes. To solve the extra equation requires
an extra counterterm. We compare this phenomenon with other phenomena in KAM theory. It seems that this technique could be used in several other problems.

As a conclusion, we obtain that the perturbation expansions developed in the previous 
paper \cite{SuZL15} converge when the potentials are in a codimension one manifold in a space of potentials. 
The method of proof also leads to efficient (low storage requirements and 
low operation count) algorithms to compute the quasi-periodic solutions.

\end{abstract} 

\keywords{Quasi-periodic Frenkel-Kontorova models,  resonant frequencies, 
equilibria, quasi-crystals, Lindstedt series, counterterms, 
KAM theory} 

\subjclass[2000]{
70K43, % View Publications (2000-now) Quasi-periodic motions and invariant tori 
37J40,  % View Publications (2000-now) Perturbations, normal forms, small divisors, KAM theory, Arnolʹd diffusion
52C23 % View Publications (2000-now) Quasicrystals, aperiodic tilings
}

\section{Introduction}
The goal of this paper is to develop a KAM theory for 
the functional equation:
\begin{equation}\label{eq:main}
v(\psi + \Omega) +
v(\psi - \Omega)
- 2 v(\psi)  + W((\psi, \eta)  + \beta v(\psi)) +\lambda = 0
\end{equation}
where $W:\torus^d \rightarrow \real$, $\Omega \in \real^{d-1}$, $\beta \in \real^{d}$ 
are given, $\psi$ is a variable in $\torus^{d-1}$ and we can 
think of $\eta \in \torus$ as a parameter. We are to find $v:\torus^{d-1}\times\torus^{1}\rightarrow \real$ as a function of $(\psi,\eta)$ 
and $\lambda \in \real$ as a function of the parameter $\eta$. We will refer to \eqref{eq:main} as the ``equilibrium equation''.

The equation \eqref{eq:main} was derived in 
the paper \cite{SuZL15} as 
the equation is satisfied by hull functions of quasi-periodic 
equilibria in Frenkel-Kontorova models of deposition on
quasi-crystals when the frequency of the equilibrium solution is 
resonant with the frequencies of the substratum. The variable $\eta$ is an angle which has the meaning of a transversal phase.

Roughly, the model describes particles interacting with their
neighbors and with a substratum which is quasi-periodic. The configuration describing the state of the system is parameterized by the hull function $v$. We try to place 
the particles with a frequency (inverse of the density of particles) which resonates with the frequencies of the medium. 
$\Omega$ represents the \emph{intrinsic frequencies}. Since the medium is 
resonating with the frequency of the configuration, the positions of the particles are parameterized by $d-1$ angles, i.e. they cover densely a $d-1$ dimensional torus which is indexed by $\eta$.
The $W$ represents the forces of the particles with the substratum and the 
$\lambda$  is an external force. 

We refer to the paper \cite{SuZL15} for the discussion of the physical 
motivations (there are several physical motivations for the Frenkel-Kontorova model)
and for a formal analysis. From the strictly logical point of 
view, this paper and \cite{SuZL15} are completely independent even if they are motivated by the same physical problem. They also rely on very different techniques. 
To avoid repetition, we refer to \cite{SuZL15} for 
references to the previous literature on the problem as well as for physical motivations. 

The main goal of this paper is to develop a KAM theory for the equation 
\eqref{eq:main}, but we will have to add 
a one dimensional extra parameter to it. 

The main source of the difficulty to implement a
Newton method -- as needed in KAM theory --  is that the equation \eqref{eq:inv_lin} (the linearization of 
the equilibrium equation \eqref{eq:main}) is 
not easily analyzable in a way that leads to tame estimates. We will deal with this problem by adding an extra auxiliary equation which implies that \eqref{eq:inv_lin} can be solved with tame estimates. The addition of an extra equation that allows to solve the linearization is similar in spirit to the introduction of the reducibility in KAM theory \cite{Moser'67}.
Nevertheless, our auxiliary equation is very different from the one in reducibility.

\subsection{The method of adding extra parameters to equations} 

The main observation that allows us 
to develop a KAM theory is that if we are allowed to adjust a 
one dimensional parameter in the potential, then the linearized equilibrium equation
admits a very nice structure (it can be factorized into two 
first order equations and the factorization allows to develop an iterative procedure which is quadratic convergent).

Hence, in Section \ref{sec:modified equilibrium equation}, we will add an extra parameter to the left hand side of \eqref{eq:main} to obtain the modified equilibrium equation \eqref{external force} such that its linearization can be factorized. Then, we  supplement \eqref{external force} with another equation \eqref{modified factorization} in Section \ref{sec:factorization equation} (we call it \emph{the factorization equation}) 
which encodes that the linearization of the equilibrium equation can be solved.

\subsubsection{The modified equilibrium equation}\label{sec:modified equilibrium equation}
For each fixed  $\eta \in \mathbb{T}^1$
we will look for a function
$v:\mathbb{T}^{d-1}\rightarrow \mathbb{R}$ and for numbers
$\lambda, \sigma$ in such a way that we have
\begin{equation}\label{external force}
\begin{split}
&\mathscr{E}[v, \sigma,\lambda](\psi, \eta)\\
\equiv &
v(\psi +\Omega) + v(\psi -\Omega) -
2v(\psi) + W((\psi, \eta) + \beta v(\psi)) + v(\psi) \sigma+
\lambda\\
= & 0.
\end{split}
\end{equation}
In the rest of the paper, we will call this modified equation \eqref{external force} the equilibrium equation. The equation \eqref{external force} has a symmetry that makes the solutions not unique (this corresponds to a gauge symmetry related to the choice of origin of the phase in the original problem). Hence, to obtain local uniqueness, we supplement \eqref{external force} with the following normalization: 

\begin{equation}\label{normalization}
\int_{\mathbb{T}^{d-1}} v(\psi) \, d \psi  = 0.
\end{equation}

We consider $\mathscr{E}$ as a functional that given a function $v$ and two numbers $\sigma, \lambda$ produces another function given by the second line in \eqref{external force}. And we treat the equation \eqref{external force} as searching for zeros of the functional $\mathscr{E}$. 

Since $v,\sigma,\lambda$ all depend on the parameter $\eta$, for convenience, we will write $v_{\eta}$ when we need to emphasize the fact that $v$ depends on $\eta$, and similarly for all the other functions. We will also obtain $\sigma$, $\lambda$ as a function of $\eta$ (and maybe of frequency $\Omega$, but we will not discuss dependence on $\Omega$ in this paper).

Depending on the physical solutions, we may impose the value of one of these variables and determine the others. For example, if we are imposing an external force we may want to fix $\lambda$ or if the material is constrained to have a certain density, we may fix $\Omega$.

It is important to notice that, once we have established the KAM theorem, eliminating some variables in terms of the others is just an application of the finite dimensional implicit function theorem.

\begin{Remark}
Variants of the idea of adding external parameters and then setting them
to zero, has appeared in many guises. In perturbative expansions in
Physics, it is called \emph{the method of counterterms}
\cite{BogoliubovS80, Gallavotti85}. In differential equations, it is
called \emph{Cesari's alternative method} (Chapter IX of \cite{Hale80},\cite{ChowH82}).
Closer to us, in KAM theory, it was introduced in \cite{Moser'67}.
It was realized in \cite{Llave86,Broer96,Yoccoz92, Sevryuk99} that it provided a
good way to deal with degenerate problems. A very systematic
treatment of dependence on parameters (including
parameters taking values in nowhere dense sets) appears in \cite{Vano02}.
\end{Remark}

\begin{Remark} 
The exact form of the counterterm added is not that important. 
We could have put other counterterms 
$\sigma F(v)$ for almost any function $F\nequiv0$. 

The way of thinking geometrically of the counterterms is 
that there is  codimension $1$ set of potentials for 
which the solutions move differentiably with respect to 
parameters. The counterterm is a projection that moves to keep the problem in this manifold. We could have 
taken any other family of corrections to the codimension one 
manifold where the solutions are found. We refer to Section \ref{sec:extensions} for a precise formulation and more details.
\end{Remark} 

\begin{Remark}
Even if we could consider \eqref{external force} as a functional equation for
each value of $\eta$, we will show in the next section that the
symmetries of the equation involve mixing the $\psi$ and $\eta$
dependence. Relatedly, we note that the equation
\eqref{external force} for a fixed value of $\eta$ does not have a variational
principle.

It will be important to mention that, because $\beta$ has components
both in the $\psi$ and the $\eta$ directions, the equation
\eqref{external force} cannot be considered just as a parameterized version
of the equations considered in \cite{SuL1}.
\end{Remark}

\subsubsection{The factorization equation}\label{sec:factorization equation}
For the KAM treatment of the equilibrium equation \eqref{external force}, 
we will find it useful to supplement \eqref{external force} with another equation which we call the factorization equation
\begin{equation}\label{modified factorization}
 \mathscr{F}[v,\sigma,c](\psi,\eta) \equiv [-c(\psi)+2-\partial_{\beta}W((\psi,\eta)+\beta v_\eta(\psi))-\sigma]\ c(\psi+\Omega)-1=0.
\end{equation}
This should be considered as a functional equation for functions  $v,c:\mathbb{T}^{d-1}\rightarrow \mathbb{R}$ and number $\sigma$ when $\eta$ is fixed.

The equation \eqref{modified factorization} gives a condition which ensures that the
 linearization of the equilibrium equation \eqref{external force} has a nice structure which allows to implement a Newton method with tame estimates (namely, that it factorizes into two first order difference equations). 
See Section \ref{sec:motivation} for a discussion of \eqref{modified factorization} as a tool for solving \eqref{external force}.

The  equation~\eqref{modified factorization} is hard to solve exactly, 
but we will be able to develop a quasi-Newton method for \eqref{modified factorization}.

The main idea of the paper is that, even if we do not know how to 
carry out a KAM theory for the equilibrium equation 
\eqref{external force} alone, we can carry out a KAM procedure for 
the pair of equations \eqref{external force} and \eqref{modified factorization}.
As discussed in Section~\ref{sec:reducibility}, similar things (a functional equation supplemented by another auxiliary one that makes the associated linearized equation  solvable) have happened in 
classical problems in KAM theory \cite{Moser'67,Eliasson88}. 

\begin{Remark} 
Notice that both the equilibrium equation \eqref{external force} and the factorization equations \eqref{modified factorization}
are coupled because $v$ is an unknown in both equations. 

Nevertheless, the effect of the factorization equation on the 
equilibrium equation appears only through the counterterm $\sigma$ and is a
mild effect (linear in $\sigma$). (See Section \ref{sec:solve the linearization} for a detailed treatment.) On the other hand, the effect of the variables 
of the equilibrium equation is very strong. Hence, one can 
think of the pair \eqref{external force} and \eqref{modified factorization} almost as an upper triangular system of equations. Indeed, the perturbative treatment of the pair \eqref{external force} and \eqref{modified factorization} has a skew product structure. See Section~\ref{sec:perturbative}. 
\end{Remark}

\subsubsection{Comparison of the method in this paper with the application of reducibility} 
\label{sec:reducibility}
The method of adding an extra equation so that the linearization 
of the equilibrium equation is solvable, has already appeared 
in KAM theory. 

In the theory of perturbation of lower dimensional elliptic 
tori, the classical  treatment is to try to 
reduce the linearized equation to an equation with constant coefficients
\cite{Moser'67, Melnikov}.  This requires extra non-resonance conditions 
and, in principle extra parameters. See, in particular \cite{ChengW99,HanY06,Treschev91},
which study the problem of breakdown of resonant tori in Hamiltonian systems. 

In the present case, the situation is completely different 
in the details 
(since we do not seek reducibility but rather factorization 
into two second order equations) as well as in the concepts (in \cite{Moser'67}, the parameters are related to initial conditions or the characteristic numbers of the linearized equation). Hence the parameter count of the present method is 
very different from what one could expect from reducibility. Also the reducibility equations have a very different geometric meaning from the factorization equations.

%%%%%However, based on the analogy of reducibility and its further 
%developments, e.g. \cite{Eliasson88,Poschel89, JorbaS96}, one could conjecture that there should be a direct analysis of the linearized equation based, for example, on
%\cite{CraigW93, Bourgain97}. We do not know how 
%to do that yet. The paper \cite{HaroL} presents a comparison of 
%the numerical methods based on reducibility and the methods based on using a direct Newton iteration. 

The factorization method has analogies in higher dimensional 
systems and in elliptic PDE \cite{Kozlov'83,Moser'88}.  
One can think of factorization as an analogue of putting the PDE in divergence form. The transformation to divergence form is achieved in \cite{Kozlov'83,Moser'88} for elliptic operators taking advantage of an identity (which is analogous to the Ward identities in gauge theory). Here, on the other 
hand, we have to do a KAM theorem to obtain an auxiliary function that gives the factorization. 

The present method gives an a-posteriori theorem and smooth dependence on parameters, justifies the perturbation theory and leads to efficient numerical implementations. 

\subsection{Properties of 
the equilibrium equations \eqref{external force} and its associated factorization equation \eqref{modified factorization}}

Before embarking on the hard analysis, in this section, we derive some 
identities and symmetries of the equations which are only soft 
analysis.  This section can be skipped by readers interested only in the KAM methods.

Many of the symmetries and elementary properties 
derived for the equilibrium equation in \cite{SuZL15} lift 
straightforwardly to the factorization equation. Surprisingly, the formal  perturbation theory developed here
for the pair \eqref{external force} and \eqref{modified factorization} is more efficient than the perturbation theory for \eqref{eq:main} alone developed in \cite{SuZL15}. The perturbation theory developed in \cite{SuZL15} was only for perturbation around integrable solutions, but the perturbation theory for the pair \eqref{external force} and \eqref{modified factorization} is developed around any solution of both equations \eqref{external force} and \eqref{modified factorization}. See Section \ref{sec:extensions}. 

The 
expansions around zero found in \cite{SuZL15} are a
particular case of the expansions found here since 
the linearization of the equilibrium equation around zero admits a trivial factorization.

\subsubsection{The symmetries of 
the equilibrium  equations \eqref{external force}. }

We note that the symmetries for the equilibrium equation found in 
\cite{SuZL15} extend to 
the factorization equation. 

We have that if $v_\eta, \sigma(\eta), \lambda(\eta)$ 
is a solution of \eqref{external force}, for any function $\iota(\eta)$ 
so is: 
  \begin{equation}\label{symmetry}
  \begin{split}
  \tilde v_\eta(\psi) & =
  v_{\eta + \iota(\eta)\beta_\eta}\big(\psi + \iota(\eta) \beta_\psi\big) + \iota(\eta),\\
 \tilde \sigma(\eta) &= \sigma\big(\eta + \iota(\eta)  \beta_\eta \big),\\
 \tilde \lambda(\eta) &=  \lambda \big(\eta + \iota(\eta)\beta_\eta\big) 
 - \iota(\eta) \ \sigma\big( \eta + \iota(\eta) \beta_\eta\big) .
  \end{split}
  \end{equation}

Here we use $\beta_{\psi}$ to denote the first $d-1$ components of $\beta$ and $\beta_{\eta}$ to denote the last component of $\beta$.

Since the symmetry \eqref{symmetry} involves changes of
arguments,  giving a $v_\eta$, finding the $\iota(\eta)$ that accomplishes the normalization involves solving the implicit
equation
\begin{equation}\label{implicit}
 I(\eta + \beta_\eta \iota(\eta) ) + \iota(\eta)  = 0
\end{equation}
where $I(\eta) \equiv \int_{\mathbb{T}^{d-1} } v_\eta(\psi)\, d \psi$.

Applying the finite dimensional implicit function theorem, we can
solve  \eqref{implicit}
if $I$ and its derivative with respect to $\iota(\eta)$ are both small.
In contrast, in the non-resonant case treated in \cite{SuL1}, the normalization of
the function $v$ could always be solved explicitly.

As we will prove in  Section~\ref{sec:proof}, the solutions
of \eqref{external force} that satisfy the normalization \eqref{normalization}
will be locally unique.

\section{Preliminaries}\label{spaces and linear estimates}
To formulate the KAM results (as well as to make quantitative the
Lindstedt series) we need to define precisely the norms of
analytic functions.  In this section, we collect the
definitions and some standard properties of spaces of analytic functions.

In Section \ref{spaces}, we collect several standard definitions of
spaces and present some preliminary results on these spaces. In
Section \ref{Diophantine properties} we present definitions of the
Diophantine properties we will use in this paper. In Section
\ref{cohomology} we present well known estimates for cohomology
equations, which are the basis of the KAM procedure. Besides the customary constant coefficient equations, we study first order cohomology equations with non-constant coefficients in section \ref{sec:twisted}, which were also studied in  \cite{Herman83}.

\subsection{Spaces of functions we will use}\label{spaces}
We will use a variation on the same  spaces
of analytic functions which have been used very often in
KAM theory since \cite{Moser'67}. We will use the same notations as in
\cite{Rafael'08,CallejaL'10, SuL1, SuL2}.

We denote by
\begin{equation*}
D_\rho\equiv \{~ \psi \in \mathbb{C}^{d-1}/\mathbb{Z}^{d-1} ~| \quad
|\mathrm{Im}(\psi_j)|<\rho ~\}.
\end{equation*}

We denote the Fourier expansion of a periodic function $v(\psi)$ on
$D_\rho$ by
\begin{equation*}
v(\psi)=\sum_{k\in \mathbb{Z}^{d-1}} \hat{v}_k e^{2\pi i k\cdot \psi},
\end{equation*} where $\cdot$ is the Euclidean scalar product in
$\mathbb{C}^{d-1}$ and $\hat{v}_k$ are the Fourier coefficients of $v$.

We denote by $\mathscr{A}_\rho$ the Banach space of analytic
functions on $D_\rho$ which are real for real argument and extend
continuously to $\overline{D_\rho}$. We make $\mathscr{A}_\rho$ a
Banach space by endowing it with the supremum norm:
\begin{equation*}
\|v\|_\rho =\sup_{\psi\in \overline{D_\rho}}|v(\psi)|.
\end{equation*}

The spaces of analytic functions $\mathscr{A}_\rho$ are the same
spaces as in \cite{Moser'67} and that some of their elementary
properties used in the argument were discussed in
\cite{CallejaL10,SuL1}.  Notably:
\begin{itemize}
\item
Interpolation inequalities (Hadamard three circle theorem):
\begin{equation}\label{interpolation inequalities}
\|v\|_{\theta\rho +(1-\theta) \rho'} \leq
\|v\|_{\rho}^{\theta} \ 
\|v\|_{\rho'}^{1-\theta}.
\end{equation}

\item
Cauchy inequalities:
\begin{align}
&\|D^l v\|_{\rho-\delta} \leq C(l,d)\ 
\delta^{-l}\  \|v\|_\rho, \nonumber\\
& |\hat{v}_k| \leq e^{-2\pi
\ |k|\ \rho}\  \|v\|_\rho.\nonumber
\end{align}

\item
The regularity of the composition:
\begin{prop}\label{composition}
Let $f$ be an analytic function in a domain $\mathscr{D} \subseteq \mathbb{C}, v\in \mathscr{A}_\rho$.
Assume $v(\mathbb{T}^d_\rho)\subseteq \mathscr{D}, \text{dist} (v(\mathbb{T}^d_\rho), \mathbb{C}-\mathscr{D}) \geq \xi>0$.
Then, 
\begin{itemize}
\item [(1)]  $f\circ v \in \mathscr{A}_\rho$;

\item [(2)] If $\|\tilde{v}\|_\rho < \frac{\xi}{2}$, we have
\[
\|  f(v+ \tilde{v}) - f(v) - f'(v) \tilde{v}\|_\rho \leq C \| \tilde{v}\|_\rho^2.
\]
\end{itemize}
\end{prop}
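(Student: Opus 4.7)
The plan is to prove (1) and (2) separately. Part (1) is the standard fact that the composition of analytic functions is analytic, and part (2) reduces to a pointwise Taylor expansion combined with a uniform Cauchy estimate on $f''$.

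For (1), I would fix an arbitrary $\psi_0 \in D_\rho$: since $v$ is analytic near $\psi_0$ and $f$ is analytic near $v(\psi_0)\in\mathscr{D}$, composing the two convergent power series (equivalently, applying the chain rule) shows $f\circ v$ is analytic in a neighborhood of $\psi_0$. The hypothesis $\dist(v(\overline{D_\rho}),\mathbb{C}\setminus\mathscr{D})\geq \xi$ forces $v(\overline{D_\rho})$ to lie in a compact subset of $\mathscr{D}$ on which $f$ is uniformly continuous, and combined with uniform continuity of $v$ on $\overline{D_\rho}$ this gives a continuous extension of $f\circ v$ to $\overline{D_\rho}$. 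Reality on real arguments is inherited from the analogous property of $v$ and $f$, which is implicit in the setup.

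For (2), fix $\psi\in\overline{D_\rho}$ and consider the segment $w_t := v(\psi) + t\,\tilde v(\psi)$, $t\in[0,1]$. Because $\|\tilde v\|_\rho<\xi/2$, each $w_t$ sits at distance at least $\xi/2$ from $\mathbb{C}\setminus\mathscr{D}$, so $t\mapsto f(w_t)$ is analytic on an open neighborhood of $[0,1]$. Taylor's theorem with integral remainder then gives, pointwise in $\psi$,
\[
f\bigl(v(\psi)+\tilde v(\psi)\bigr) - f(v(\psi)) - f'(v(\psi))\,\tilde v(\psi)
= \tilde v(\psi)^2 \int_0^1 (1-t)\, f''(w_t)\,dt.
\]

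The remaining task, and the only step requiring a bit of care, is a uniform bound on $f''$ along these segments. I would introduce the compact set $U := \{w\in\mathbb{C}: \dist(w,v(\overline{D_\rho}))\leq 3\xi/4\}\subset\mathscr{D}$, on which $M := \sup_U |f|$ is finite by continuity. For any $w$ with $\dist(w,v(\overline{D_\rho}))\leq \xi/2$, the closed disk $\{|z-w|\leq \xi/4\}$ lies in $U$, and Cauchy's estimate gives $|f''(w)|\leq 32 M/\xi^2$. Plugging this into the integral, using $\int_0^1(1-t)\,dt = 1/2$, and taking the supremum over $\psi\in\overline{D_\rho}$ yields the desired estimate with $C = 16 M/\xi^2$. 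The only subtle point is the book-keeping of the nested safety radii $\xi$, $3\xi/4$, $\xi/2$, $\xi/4$ so that Cauchy's estimate applies uniformly in $\psi$; everything else is routine.
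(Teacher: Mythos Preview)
Your argument is correct. The paper does not give a proof of this proposition; it is stated as one of several standard elementary properties of the spaces $\mathscr{A}_\rho$ (alongside the interpolation and Cauchy inequalities), so there is no proof in the paper to compare against. Your treatment via Taylor's integral remainder and a uniform Cauchy bound on $f''$ using nested safety radii is exactly the kind of routine verification the authors are taking for granted.
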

\end{itemize}

The cohomology equations we will have to consider are
different from those studied before  and we will present the results in
Section~\ref{sec:twisted}.

\begin{Remark}
The method of proof works also for spaces of functions with finite differentiability.
Indeed, KAM  theory is often formulated as an abstract implicit function
theorem for the functional $\mathscr{E}$ acting on spaces of functions
that satisfy some mild properties \cite{Schwartz60, Zehnder75}.
(The paper \cite{CallejaL10} presents an implicit function theorem
well suited for the method in this paper.)

In particular, the method in this paper works as well when
$v$ is considered in Sobolev spaces of high enough
regularity. For simplicity, we will not formulate the finite differentiable version of the results.
\end{Remark}

\subsection{Diophantine condition}\label{Diophantine properties}
%We will assume that $\alpha \in\mathbb{R}^{d}$ is non-resonant, i.e.
%\begin{equation}\label{nonresonant}
%|\alpha\cdot k|\neq0\qquad \forall~k\in \mathbb{Z}^d- \{0\}.
%\end{equation}

%We are interested in the resonant rotation number $\omega\in \mathbb{R}$ such
%that for some $k\in \mathbb{Z}^d-\{0\}, m\in\mathbb{Z}$, we have 
%\begin{equation}\label{resonance}
%k\cdot\omega\alpha - m = 0.
%\end{equation}

%If  \eqref{resonance} holds we can find a matrix $B\in SL(d, \mathbb{Z})$,
%$\Omega \in \mathbb{R}^{d-1}$, $L \in \mathbb{Z}^d$ such that
%\begin{equation}\label{Bdefined}
%B\omega\alpha = (\Omega, 0)  + L \quad \text{with}\quad \Omega\cdot
%\tilde{k}\notin \mathbb{Z} \text{ for } \tilde{k}\in
%\mathbb{Z}^{d-1}-\{0\}.
%\end{equation}

%We also require that $\Omega$ satisfies the
%Diophantine condition in $\mathbb{R}^{d-1}$:
%\begin{equation}\label{Diophantine}
%|\tilde k\cdot \Omega-m| \geq \nu |\tilde{k}|^{-\tau} \qquad \forall~
%\tilde{k}\in \mathbb{Z}^{d-1}-\{0\},~m\in \mathbb{Z}.
%\end{equation}
%Here $\nu,\tau$ are positive numbers.

We will require that $\Omega$ satisfies the Diophantine condition in $\mathbb{R}^{d-1}$:
\begin{equation}\label{Diophantine}
|k\cdot \Omega-m| \geq \kappa |\tilde{k}|^{-\tau} \qquad \forall~
k\in \mathbb{Z}^{d-1}-\{0\},~m\in \mathbb{Z}.
\end{equation}
Here $\kappa,\tau$ are positive numbers.

We denote by
$\mathscr{D}(\kappa,\tau)$ the
set of $\Omega$ which satisfy \eqref{Diophantine}.
We also denote $\mathscr{D}(\tau) = \cup_{\kappa > 0} \mathscr{D}(\kappa,\tau)$.

It is well known that the set
$\mathscr{D}(\tau)$ with $\tau > d-1$ is of
 full $d-1$ dimensional
Lebesgue measure in $\mathbb{R}^{d-1}$.

%If we fix $\alpha$ the frequencies of the media, the $\omega$'s that
%lead to a resonance as in \eqref{Diophantine} form a countable set. Further discussion of resonances %appear in \cite{SuZL15}.

\subsection{Cohomology equations}\label{cohomology}

It is standard in KAM theory to solve for $v$ given
$\phi$  with zero average in such a way that:
\begin{equation}\label{Cohomology equation}
v(\psi+\Omega)-v(\psi)=\phi(\psi),
\end{equation}
where $\Omega\in\mathscr{D}(\kappa,\tau)$.

We will use equations similar to \eqref{Cohomology equation} frequently with some fixed frequency
$\Omega$. To simplify our notations, we will denote $v(\psi+\Omega)$
and $v(\psi-\Omega)$ as $v_+(\psi)$ and $v_-(\psi)$, respectively. Similar
notations will be used for other functions. We also use $T$ to
represent the translation operators,
i.e. $[T_{\Omega}v](\psi)=v(\psi+\Omega)$.

Estimates for \eqref{Cohomology equation} for Diophantine frequencies
were  proved in \cite{Russmann'75, Russmann'76}.
The crucial point of these estimates is that the solution is bounded
in smaller domains and that there are bounds on the solution in the smaller
domains. These estimates have large constants if the loss of domain is small, but the constants can be chosen to be a power of the domain loss (tame estimates). See \eqref{cohomology bounds for analytic fixed}.

%The proof of these tame estimates  with worse exponents is  elementary and
%was known much earlier than \cite{Russmann'75, Russmann'76}.

Tame estimates (and hence Diophantine conditions) are used in  the convergence proofs and
in the KAM theory developed here.

\begin{lemma}\label{estimate lemma for cohomology equation}
Let $\phi \in\mathscr{A}_\rho(\mathbb{T}^{d-1})$  be such that
\begin{equation}\label{normalization phi}
\int_{\mathbb{T}^{d-1}}\phi (\psi)d\psi=0.
\end{equation}

Assume that $\Omega \in \mathscr{D}(\kappa,\tau)$.

Then, there
exists a unique solution $v$ of
\eqref{Cohomology equation} which satisfies
\begin{equation}\label{normalization equation}
\int_{\mathbb{T}^{d-1}} v (\psi)d\psi = 0.
\end{equation}
The solution $v$ is in $\mathscr{A}_{\rho'} $ for
any $0<\rho' < \rho$, and
\begin{equation}\label{cohomology bounds for analytic fixed}
\|v\|_{\rho^\prime}\leq C(d,\tau)\ \kappa^{-1}\
(\rho-\rho^\prime)^{-\tau}\|\phi \|_\rho.
\end{equation}

Furthermore, any distribution solution of \eqref{Cohomology
equation} differs from the solution claimed before by a constant.

If $\phi$ is such that it takes real values for real arguments,
so does $v$.

\medskip

Similarly if we consider analytic functions $\phi\in \mathscr{A}_\rho(\mathbb{T}^d)$ satisfying $\int_{\mathbb{T}^{d-1}} \phi (\psi,\eta)d\psi=0$, then for each $\eta$, we can solve
\begin{equation}\label{Cohomology equation2}
v(\psi+\Omega,\eta)-v(\psi,\eta)=\phi(\psi,\eta).
\end{equation}
The solution $v\in\mathscr{A}_{\rho}(\mathbb{T}^d)$ is analytic in $(\psi,\eta)$ and we have
\begin{equation}\label{cohomology bounds for analytic2}
\|v\|_{\rho^\prime}\leq C(d,\tau)\ \kappa^{-1}\
(\rho-\rho^\prime)^{-\tau}\|\phi\|_\rho.
\end{equation}
\end{lemma}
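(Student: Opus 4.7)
\bigskip

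\noindent\textbf{Proof proposal.} The plan is to follow the classical Fourier-analytic argument of R\"ussmann. Expand $\phi(\psi) = \sum_{k \in \mathbb{Z}^{d-1}} \hat{\phi}_k \, e^{2\pi i k \cdot \psi}$; the zero-average hypothesis \eqref{normalization phi} says $\hat{\phi}_0 = 0$. If a solution $v$ of \eqref{Cohomology equation} exists, matching Fourier coefficients forces
\begin{equation*}
\hat{v}_k = \frac{\hat{\phi}_k}{e^{2\pi i k \cdot \Omega} - 1}, \qquad k \neq 0,
\end{equation*}
while the normalization \eqref{normalization equation} fixes $\hat{v}_0 = 0$. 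I take this formula as the \emph{definition} of $v$ and verify it has the required properties.

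To estimate the small divisors I use the elementary inequality $|e^{2\pi i \theta}-1| \ge 4\|\theta\|$, where $\|\theta\|$ is the distance to the nearest integer, combined with the Diophantine condition \eqref{Diophantine} applied with $m$ chosen as the nearest integer to $k\cdot \Omega$: this yields
\begin{equation*}
|e^{2\pi i k\cdot \Omega} - 1|^{-1} \le C \kappa^{-1} |k|^{\tau}.
\end{equation*}
Combined with the Cauchy bound $|\hat{\phi}_k| \le e^{-2\pi |k| \rho}\|\phi\|_\rho$, this gives, for $|\mathrm{Im}\,\psi_j| < \rho'$,
\begin{equation*}
|v(\psi)| \le C \kappa^{-1} \|\phi\|_\rho \sum_{k \neq 0} |k|^{\tau}\, e^{-2\pi |k| (\rho-\rho')}.
\end{equation*}
The remaining ingredient is the standard lemma bounding $\sum_{k\neq 0}|k|^{\tau} e^{-2\pi |k| \delta}$ by $C(d,\tau)\,\delta^{-\tau}$ in the tame sense stated in \eqref{cohomology bounds for analytic fixed}; the most delicate point is squeezing out the sharp exponent $\tau$ rather than a naive $\tau+d-1$. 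This is handled by splitting the loss of domain $\rho - \rho'$ into two pieces: use part of it together with the exponential decay to kill the polynomial $|k|^{\tau}$, and reserve the remaining piece to sum the tail — this is precisely R\"ussmann's trick, and it yields the claimed bound \eqref{cohomology bounds for analytic fixed}.

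For the three remaining assertions: \emph{uniqueness up to a constant} among distribution solutions follows because any distributional periodic $w$ with $w(\psi+\Omega) = w(\psi)$ has Fourier coefficients $\hat{w}_k$ satisfying $(e^{2\pi i k\cdot \Omega} - 1)\hat{w}_k = 0$, forcing $\hat{w}_k = 0$ for $k \neq 0$ since $k\cdot \Omega \notin \mathbb{Z}$ by \eqref{Diophantine}. \emph{Reality} of $v$ follows from the symmetry $\hat{\phi}_{-k} = \overline{\hat{\phi}_k}$ together with the analogous symmetry of the divisors (using that $\Omega$ is real), which gives $\hat{v}_{-k}=\overline{\hat{v}_k}$. \emph{The parameterized statement} \eqref{Cohomology equation2} is essentially the same argument: for each $\eta$, Fourier expand only in $\psi$, with coefficients $\hat{\phi}_k(\eta)$ that are analytic in $\eta \in \mathbb{T}^1_\rho$. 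Since the small divisors $e^{2\pi i k\cdot \Omega}-1$ do not depend on $\eta$, analyticity in $\eta$ is preserved term by term, the sum converges uniformly on the product domain, and the same R\"ussmann estimate applied uniformly in $\eta$ gives \eqref{cohomology bounds for analytic2}. The main (and only) real obstacle is the R\"ussmann summation producing the sharp tame exponent; everything else is bookkeeping.
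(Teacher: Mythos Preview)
Your proposal is correct and follows exactly the classical Fourier/R\"ussmann approach that the paper itself invokes; in fact the paper does not give a self-contained proof of this lemma but simply writes down the Fourier formula $\hat v_k = \hat\phi_k (e^{2\pi i k\cdot\Omega}-1)^{-1}$, notes that reality is obvious from it, and cites R\"ussmann \cite{Russmann'75, Russmann'76} for the sharp tame estimate \eqref{cohomology bounds for analytic fixed}. One small caveat: your one-line description of ``R\"ussmann's trick'' as splitting the domain loss in two is closer to the easy argument that yields the cruder exponent $\tau+d-1$ (which the paper also alludes to), whereas the genuine $\delta^{-\tau}$ bound requires R\"ussmann's more delicate counting of small divisors --- but since you are explicitly deferring to his lemma at that point, this does not affect the correctness of your outline.
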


We note that, as it is well known that obtaining $v$
solving \eqref{Cohomology equation} for
given $\phi$  is very explicit in
terms of Fourier coefficients.
If
\[
\phi(\psi) = \sum_{k \ne 0} \hat \phi_k  e^{2 \pi i (k \cdot \psi)}
\]
then,  $v$ is given by
\[
v(\psi) = \sum_{k\ne 0} \hat \phi_k (e^{2 \pi i k \cdot \Omega} -1)^{-1} e^{2 \pi i (k \cdot \psi)}.
\]

The above formula for
the solution makes it clear that if $\phi$ is real for real values of its
arguments, so is $v$.  Note also that if the function $\phi$ is
discretized in terms of $N$ Fourier coefficients, the computation of
the Fourier coefficients of $v$ takes only $N$ operations.
It also makes it clear how to obtain the tame estimates (with a worse exponent)
for Diophantine frequencies or the results for \eqref{subexponential}.

\subsection{Cohomology equations with non-constant coefficients} 
\label{sec:twisted}
In this section we consider a generalization of the above theory for
equations with non-constant coefficients. These equations have also been called 
{\sl twisted cohomology equations} in \cite{Herman83}. 
Our strategy is similar to \cite{Herman83}, but we will need more details about the dependence of the solutions on the coefficient.

We will be considering equations of the form 
\begin{equation} 
\label{twisted} 
a(\psi) v(\psi + \Omega) - b(\psi) v(\psi) =  \lambda +  \phi(\psi) 
\end{equation} 
where $\psi\in\mathbb{T}^{d-1}$ and $a,b$ are fixed functions in $\mathscr{A}_\rho$.

We consider that $\phi\in\mathscr{A}_{\rho}$ is the known data of the equation and 
$\lambda\in\mathbb{R}$, $v\in\mathscr{A}_{\rho-\delta}$ are the unknowns we seek for. We will refer to \eqref{twisted} as ``twisted cohomology equations".

We will show that, given some non-degeneracy conditions in 
$a$ and $b$, we can obtain estimates for $\lambda$ and $v$. 
The idea, already present in \cite{Herman83}, is that we
can rewrite the equation \eqref{twisted} into a constant coefficient equation. We will go through the procedure in details because we will need rather detailed estimates on the effect of $a$ and $b$.

\begin{lemma}\label{lm:twisted}
Suppose
$||a-1||_{\rho}<r<1,~ ||b-1||_{\rho}<r<1$ and that $\Omega$ satisfies the Diophantine condition \eqref{Diophantine}. Then there exist positive real valued functions $\gamma_a, \gamma_b \in \mathscr{A}_{\rho-\delta/2}$ and 
real numbers $\bar a$, $\bar b$ such that 
\begin{equation} \label{auxiliary1}
\begin{split}
a(\psi) &= {\bar a} \frac{\gamma_a(\psi + \Omega)}{\gamma_a(\psi)}, \\ 
b(\psi) &= {\bar b} \frac{\gamma_b(\psi)}{\gamma_b(\psi + \Omega)}. \\ 
\end{split} 
\end{equation} 

In addition, for the twisted cohomology equation \eqref{twisted}, there exist a unique solution  $\lambda\in\mathbb{R}$ and $v\in\mathscr{A}_{\rho-\delta}$ for \eqref{twisted} satisfying the normalization 
\begin{equation}\label{normalization for twist cohomology}
\int_{\mathbb{T}^{d-1}}v d\psi=0
\end{equation}
such that
\begin{equation}\label{twist estimate}
  \begin{split}
    |\lambda|&\leq \frac{||\phi||_{\rho}||\gamma_a(\gamma_b)_+||_{\rho-\delta/2}}{\int_{\mathbb{T}^{d-1}} \gamma_a(\gamma_b)_+d\psi}\\
  ||\gamma_a\gamma_bv||_{\rho-\delta}&\leq C\delta^{-\tau}
||(\lambda+\phi)\gamma_a(\gamma_b)_+||_{\rho}\qquad \forall~ \delta>0.
\end{split}
\end{equation}
Therefore,
\begin{equation}\label{twist final estimate}
  ||v||_{\rho-\delta}\leq C||\gamma_a||_{\rho}\left\|1/\gamma_a \right\|_{\rho-\delta}
||\gamma_b||_{\rho}\left\|1/\gamma_b \right\|_{\rho-\delta }\delta^{-\tau}||  ( \lambda+\phi ) ||_{\rho}\qquad \forall~ \delta>0.
\end{equation}
\end{lemma}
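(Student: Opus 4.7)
The plan is to absorb the variable coefficients into multiplicative coboundaries, reducing \eqref{twisted} to a constant-coefficient cohomology equation of the type covered by Lemma \ref{estimate lemma for cohomology equation}, and then to read $\lambda$ off as the integrability obstruction of the resulting equation.

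\emph{Logarithmic factorization (proving \eqref{auxiliary1}).} Because $\|a-1\|_\rho<r<1$, the principal branch of $\log a$ lies in $\mathscr{A}_\rho$ with $\|\log a\|_\rho\le Cr$ (apply Proposition \ref{composition} with $f=\log$). Set $\log\bar a:=\int_{\mathbb{T}^{d-1}}\log a\,d\psi$, so $\log a-\log\bar a$ has zero mean. Lemma \ref{estimate lemma for cohomology equation} then yields a unique zero-mean $\alpha\in\mathscr{A}_{\rho-\delta/2}$ solving $\alpha_+-\alpha=\log a-\log\bar a$ with $\|\alpha\|_{\rho-\delta/2}\le C\kappa^{-1}\delta^{-\tau}r$, and $\gamma_a:=e^{\alpha}$ is then positive, real, close to $1$, and satisfies the first identity of \eqref{auxiliary1}. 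The second identity is proved the same way, solving the cohomology equation with right-hand side $\log\bar b-\log b$ and setting $\gamma_b:=e^{\beta}$; the sign flip accounts for the swapped positions of $\psi$ and $\psi+\Omega$ in the ansatz for $b$.

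\emph{Reduction of the twisted equation.} Multiply \eqref{twisted} by $\gamma_a(\psi)\,\gamma_b(\psi+\Omega)$. Using $a\gamma_a=\bar a(\gamma_a)_+$ and $b(\gamma_b)_+=\bar b\gamma_b$ from the previous step, the left-hand side collapses to $\bar a(\gamma_a\gamma_b v)_+-\bar b(\gamma_a\gamma_b v)$. Introducing $w:=\gamma_a\gamma_b v$ and $g:=(\lambda+\phi)\gamma_a(\gamma_b)_+$, the equation becomes $\bar a w_+-\bar b w=g$. The structure of the estimate \eqref{twist estimate} shows that in the intended application we are in the case $\bar a=\bar b$ (the Fredholm condition that makes $\lambda$ a genuine cohomological obstruction, hence uniquely determined); after dividing by $\bar a$ we get the constant-coefficient equation $w_+-w=g/\bar a$. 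Imposing the solvability condition $\int g\,d\psi=0$ uniquely fixes
\[
\lambda=-\Bigl(\int_{\mathbb{T}^{d-1}}\gamma_a(\gamma_b)_+\,d\psi\Bigr)^{-1}\int_{\mathbb{T}^{d-1}}\phi\,\gamma_a(\gamma_b)_+\,d\psi,
\]
and since $\gamma_a,\gamma_b$ are close to $1$ the denominator is bounded below, while the numerator is controlled by $\|\phi\|_\rho\|\gamma_a(\gamma_b)_+\|_{\rho-\delta/2}$; this gives the first line of \eqref{twist estimate}.

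\emph{Solving for $w$ and back-substitution.} Applying Lemma \ref{estimate lemma for cohomology equation} to $w_+-w=g/\bar a$ with an additional domain loss of $\delta/2$ produces the second line of \eqref{twist estimate}, with the normalization \eqref{normalization for twist cohomology} fixing the free additive constant in $w$ (equivalently in $v$). Recovering $v=w/(\gamma_a\gamma_b)$ and splitting the supremum norm pointwise delivers \eqref{twist final estimate}. The one subtle point is the identification $\bar a=\bar b$: it is not forced by the abstract hypotheses $\|a-1\|_\rho,\|b-1\|_\rho<r$ alone, and must be read off from the particular shape the twisted equation takes when it is generated by linearizing the coupled system \eqref{external force}--\eqref{modified factorization}. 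Everything else---the even split of the total loss $\delta$ between constructing $\gamma_a,\gamma_b$ and solving the reduced equation, and the use of Cauchy estimates together with Proposition \ref{composition} to pass between $a,b$ and their logarithms/exponentials---is routine bookkeeping.
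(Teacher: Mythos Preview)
Your argument follows the same route as the paper's: take logarithms to reduce \eqref{auxiliary1} to standard cohomology equations, multiply \eqref{twisted} by $\gamma_a(\gamma_b)_+$ to obtain the constant-coefficient equation $\bar a\,w_+-\bar b\,w=(\lambda+\phi)\gamma_a(\gamma_b)_+$ with $w=\gamma_a\gamma_b v$, and then invoke Lemma~\ref{estimate lemma for cohomology equation}. The mechanics and bookkeeping are correct.

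The one genuine gap is your handling of the case $\bar a\neq\bar b$. You defer it by saying $\bar a=\bar b$ ``must be read off from the particular shape the twisted equation takes'' in the application, but the lemma is stated for arbitrary $a,b$ with $\|a-1\|_\rho,\|b-1\|_\rho<r$, and nothing in those hypotheses forces the averages of $\log a$ and $\log b$ to coincide. The paper treats this case explicitly: when $\bar a\neq\bar b$ there are no small divisors, and one solves $w_+-(\bar b/\bar a)w=g/\bar a$ by the convergent geometric series $w(\psi)=\sum_{n\ge0}(\bar b/\bar a)^n\,g(\psi-(n{+}1)\Omega)/\bar a$ (or the analogous forward series if $\bar a<\bar b$), with no loss of domain but a constant depending on $\bar a,\bar b$. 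In this regime the solution is unique for each $\lambda$, and $\lambda$ is then fixed by imposing the normalization $\langle v\rangle=0$, the average of $w$ being an affine function of $\lambda$ with nonzero slope $\langle\gamma_a(\gamma_b)_+\rangle/(\bar a-\bar b)$. To prove the lemma as stated you need to supply this half of the argument rather than outsource it to the downstream application.
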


\begin{proof}
To show \eqref{auxiliary1}, it suffices to observe that, taking logarithms, \eqref{auxiliary1} is 
equivalent to: 
\begin{equation} \label{auxiliarylog}
\begin{split}
(\log a)(\psi) &= \log( {\bar a})  +  \log(\gamma_a)(\psi + \Omega) 
- \log( \gamma_a) (\psi) \\ 
(\log b)(\psi) &= \log({\bar b}) 
+ \log( \gamma_b) (\psi) - \log(\gamma_b)(\psi + \Omega) \\ 
\end{split} 
\end{equation} 
which are cohomology equations with constant coefficients. Applying Lemma~\ref{estimate lemma for cohomology equation}, we get solutions $\gamma_a,\gamma_b\in\mathscr{A}_{\rho-\delta/2}$, $\bar{a},\bar{b}\in\mathbb{R}$ and estimates
\begin{equation} \label{estimates on log}
\begin{split}
||\log \gamma_a||_{\rho-\delta/2} &\leq C(d,\tau,\kappa)\delta^{-\tau}||\log a-\log\bar{a}||_{\rho}, \\ 
||\log \gamma_b||_{\rho-\delta/2} &\leq C(d,\tau,\kappa)\delta^{-\tau}||\log b-\log\bar{b}||_{\rho}.\\ 
\end{split} 
\end{equation} 

Once we solve the constant coefficient cohomology equation \eqref{auxiliarylog} for $\log{a}, \log{b}, \log{\gamma_a}, \log{\gamma_b}$ we obtain $a, b, \gamma_a, \gamma_b$ by taking exponentials. 
This ensures that they are positive for real values of the argument.
We refer to $\bar{a},\bar{b}$ as the average coefficients of the cohomology equation.

Once we have the solution of 
\eqref{auxiliary1}, we realize that the  
equation \eqref{twisted} is equivalent to
\begin{equation} \label{twisted modified} 
  \bar{a}(\gamma_a\gamma_b v)_+-\bar{b}(\gamma_a\gamma_b v)=(\lambda+\phi)\gamma_a(\gamma_b)_+
\end{equation}
which is a cohomology equation with constant coefficients. Let us denote $m=\gamma_a\gamma_b v$ for simplicity.

When $\bar{a}=\bar{b}$, we can solve \eqref{twisted modified} using Lemma \ref{estimate lemma for cohomology equation} and get estimates \eqref{twist estimate}. 

Note if $\bar{a}\neq \bar{b}$, the equation \eqref{twisted modified} is easier to solve since no small divisors appear. 
\begin{itemize}
\item If $\bar{a}> \bar{b}$, then \eqref{twisted modified} is equivalent to 
\[
m_+ - \frac{\bar{b}}{\bar{a}}  m = \frac{\lambda+\phi}{\bar{a}}\gamma_a(\gamma_b)_+,
\]
namely, $m_+= \frac{\lambda+\phi}{\bar{a}}\gamma_a(\gamma_b)_+ + \frac{\bar{b}}{\bar{a}}  m$.

Therefore,
\[
m(\psi) 
%= \frac{\lambda+\phi(\psi-\Omega)}{\bar{a}}\gamma_a(\psi-\Omega) \gamma_b(\psi) + \frac{\bar{b}}{\bar{a}} %m(\psi-\Omega)
=\sum_{n=0}^\infty \left(\frac{\bar{b}}{\bar{a}}\right)^{n} \frac{\lambda+\phi\big(\psi - (n+1)\Omega\big)}{\bar{a}}\gamma_a\big(\psi-(n+1)\Omega\big) \gamma_b(\psi-n\Omega)
\]
is a solution of \eqref{twisted modified}. Hence, $\|m\|_\rho \leq C(\bar{a}, \bar{b}) \ 
||(\lambda+\phi)\gamma_a(\gamma_b)_+||_{\rho}$.

\item If $\bar{a} < \bar{b}$, then \eqref{twisted modified} is equivalent to 
\[
\frac{\bar{a}}{\bar{b}}m_+  -  m = \frac{\lambda+\phi}{\bar{b}}\gamma_a(\gamma_b)_+,
\]
namely, $m=  \frac{\bar{a}}{\bar{b}}  m_+ - \frac{\lambda+\phi}{\bar{b}}\gamma_a(\gamma_b)_+ $.

Therefore,
\[
m(\psi) 
%= \frac{\lambda+\phi(\psi-\Omega)}{\bar{a}}\gamma_a(\psi-\Omega) \gamma_b(\psi) + \frac{\bar{b}}{\bar{a}} %m(\psi-\Omega)
=- \sum_{n=0}^\infty \left(\frac{\bar{a}}{\bar{b}}\right)^{n} \frac{\lambda+\phi (\psi + n\Omega)}{\bar{b}}\gamma_a(\psi +n\Omega) \gamma_b\big(\psi+(n+1)\Omega\big)
\]

is a solution of \eqref{twisted modified}. Hence, $\|m\|_\rho \leq C(\bar{a}, \bar{b}) \ 
||(\lambda+\phi)\gamma_a(\gamma_b)_+||_{\rho}$.
\end{itemize} 
Note that when $\bar{a}\neq \bar{b}$ we do not have any loss of domain, but the estimates depend on $\bar{a},\bar{b}$. To get estimates uniformly in $a,b$, we need to use the Fourier method.
\end{proof}

The choice of the parameter $\lambda$ so as to achieve the normalization deserves some discussion. In the case $\bar{a}=\bar{b}$ we see that we have to choose $\lambda$ in such a way that the right hand side of \eqref{twisted modified} has zero average. In this case, however, we can choose the average of the solution of \eqref{twisted modified} arbitrarily. 

In the case $\bar{a}\neq \bar{b}$, given any $\lambda$, the solution of \eqref{twisted modified} will be unique. Furthermore, the solution will be an affine function of $\lambda$ and, hence, so will be its average.

We see that the derivative of the average of the solution with respect to $\lambda$ is 
\[
\frac{d}{d\lambda} \langle m \rangle = \frac{\langle \gamma_a (\gamma_b)_+\rangle}{\bar{a} - \bar{b}}.
\]

In summary, in the equal average coefficient case, the equation requires adjusting one parameter to be solvable, but it gives back one free parameter of solutions.
In the different average coefficient case, we do not require any parameter to ensure the solutions, but the solution is unique.

In both cases, the solutions of twisted cohomology equations require as many parameters as they give back. This allows us to discuss the solutions of the equations which factorize into twisted cohomology equations and they require as many parameters as they give back.

\begin{Remark}
It will be important for subsequent applications that the estimates \eqref{twist estimate} are formulated in terms of $\gamma$. We could have attempted to formulate them in terms of $a$, but this is not practical for subsequent applications. On the other hand, we have formulated the normalization condition \eqref{normalization for twist cohomology} in a way that it is independent of $\gamma$. This will be important in our iterative scheme because $\gamma$ will change from step to step.

In subsequent applications, we will be using Lemma \ref{lm:twisted} when the $a,b$ are changing. It will be important to track the changes of $\gamma_a, \gamma_b$  when we change $a, b$. See  estimates \eqref{estimate for log_sigma_c}.

\end{Remark}

\section{The KAM theorem}
\label{KAM}
In this section, we will state precisely the main result of this paper. 

\subsection{Statement of the main result}
For the following theorem, we will fix the parameter $\eta$ and omit the subscript $\eta$.

We denote $\langle g\rangle$ the average of some function $g$ and $\tilde{g}=g-\langle g\rangle$.
We also use
the notation:
\[
\mathscr{L} = T_{\Omega}+T_{-\Omega}-2+\partial_{\beta}W((\psi,\eta)+\beta v)+\sigma.
\]
\begin{theorem}\label{analytic KAM theorem}
Let $\alpha\in\mathbb{R}^d$ such that
$\alpha\cdot j\neq0,~j\in\mathbb{Z}^d \setminus \{0\}$ and $\omega\in \mathbb{R}$ be such that $\omega \alpha$ is resonant. Let $W$ be an analytic function defined in a domain $\mathscr{D}\subset\mathbb{C}^d /\mathbb{Z}^d$. Take $\rho>0$ and $0<s<\rho/2$.

Denote:
\begin{eqnarray}
\label{translation}
\mathscr{E}(v,\sigma,\lambda)&=&v_++v_--2v+W((\psi,\eta)+\beta v)+\sigma v + \lambda,\\
\mathscr{F}(v,\sigma,c)&=&(-c+2-\partial_{\beta}W((\psi,\eta)+\beta v)-\sigma)c_+-1.
\end{eqnarray}

We assume:
\begin{itemize}
\item[(H1)] Initial guesses: Let $(v^0, c^0, \sigma^0, \lambda^0)$ be an approximate solution such that $\|\mathscr{E}[v^0,\sigma^0,\lambda^0]\|_\rho \le \epsilon$ and $\|\mathscr{F}[v^0,\sigma^0,c^0]\|_\rho \le \epsilon$.
\item [(H2)] Diophantine properties: There exists a matrix $B\in SL(d, \mathbb{Z})$,
$\Omega \in \mathbb{R}^{d-1}$, $L \in \mathbb{Z}^d$ such that
$B\omega\alpha = (\Omega, 0)  + L $ with
\begin{equation*}
|l\cdot \Omega-n| \geq \kappa |l|^{-\tau} \qquad \forall~
l\in \mathbb{Z}^{d-1}-\{0\},~n\in \mathbb{Z}.
\end{equation*}
\item [(H3)] Non-degeneracy conditions:
\begin{equation*}
\begin{split}
&||c^0-1||_{\rho}<M_1<1,\\
&|\sigma^0|<M_2<1,\\
&||W_{v^0}||_{C^2(\mathscr{D})}< M_3<1\\
&||v^0||_{\rho}<m<1,
\end{split}
\end{equation*}
where $W_{v^0}$ is short for $W((\psi,\eta) + \beta v^0)$, $M_1+M_2+M_3<1$ and $m$ depends on $d,\tau,\kappa,M_i,s$.
\item[(H4)] Composition condition: Denote
  \begin{equation*}
    \mathscr{R}_{v^0}=\{(\psi,\eta)+\beta v^0(\psi),(\psi,\eta)\in\mathbb{T}^d_{\rho}\}.
  \end{equation*}
We assume $\mathscr{R}_{v^0}\subset\mathscr{D}$ and $\dist(\mathscr{R}_{v^0},\partial\mathscr{D})\geq 2\xi>0$, where $\xi$ depends on $d,\tau,\kappa,M_i,s,|\beta|_1$.
\end{itemize}

Let $\rho'=\rho-s-\delta<\rho-s$. Let $\epsilon$ be such that
\[
\epsilon\leq\epsilon^*\delta^{4\tau},
\]
where $\epsilon^*$ depends on $d,\tau,\kappa,M,s,|\beta|_1$ and will be specified in the proof.

Then,
there exist functions $v^*,c^*\in\mathscr{A}_{\rho'}$ and numbers
$\sigma^*,\lambda^*\in\mathbb{R}$ such that
\begin{equation}\label{distance}
\begin{split}
& \mathscr{E}[v^*,\sigma^*,\lambda^*]=0,\\
& \mathscr{F}[v^*,\sigma^*,c^*]=0.
\end{split}
\end{equation}

In addition,
\begin{equation}\label{distance2}
\begin{split}
& || v^0 - v^* ||_{\rho'} \le C\delta^{-4\tau}\epsilon,\\
& | \lambda^0 - \lambda^* |
\le C \delta^{-2\tau}\epsilon,\\
& | \sigma^0 - \sigma^* |
\le C \delta^{-2\tau}\epsilon,\\
& || c^0 - c^* ||_{\rho'} \le C \delta^{-4\tau} \epsilon,
\end{split}
\end{equation}
where $C$ depends on $d,\tau,\kappa,M,s,|\beta|_1$. In the rest of the paper, we will use $C$ to denote any constant depending on these parameters.

We also have local uniqueness: Suppose $(v_1,\sigma_1,\lambda_1,c_1)$ and $(v_2,\sigma_2,\lambda_2,c_2)$ satisfying
  \begin{align}
    &\mathscr{E}[v_1,\sigma_1,\lambda_1]=\mathscr{E}[v_2,\sigma_2,\lambda_2]=0,\\
    &\mathscr{F}[v_1,\sigma_1,c_1]=\mathscr{F}[v_2,\sigma_2,c_2]=0,
  \end{align}
and the normalization condition
\begin{equation}
  \int_{\mathbb{T}^{d-1}}v_1d\psi=\int_{\mathbb{T}^{d-1}}v_2d\psi=0.
\end{equation}
If
\begin{equation}
  \max\{||v_2-v_1||_{3\rho/2},|\sigma_2-\sigma_2|,|\lambda_2-\lambda_1|\}<\epsilon^*{\rho},
\end{equation}
then, we have
\begin{equation}
  (v_1,c_1,\sigma_1,\lambda_1)=(v_2,c_2,\sigma_2,\lambda_2).
\end{equation}
Finally, we have that the solution depends on the parameter $\eta$ analytically.
\end{theorem}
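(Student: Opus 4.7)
My plan is to establish the theorem by a quadratically convergent quasi-Newton iteration on the coupled pair $(\mathscr{E},\mathscr{F})$ in the Banach scale $\{\mathscr{A}_\rho\}$ of analytic functions. The algebraic identity that makes the scheme work is that whenever $\mathscr{F}[v,\sigma,c]=0$ holds exactly, one has $2-\partial_{\beta}W((\psi,\eta)+\beta v)-\sigma = c + 1/c_+$, which produces the factorization
\begin{equation*}
\mathscr{L} \;=\; -(c\,T_{-\Omega}-1)\,(T_\Omega - 1/c_+),
\end{equation*}
as one verifies directly using $T_\Omega T_{-\Omega}=\mathrm{id}$; in general the difference between $\mathscr{L}$ and $-(cT_{-\Omega}-1)(T_\Omega-1/c_+)$ is multiplication by $\mathscr{F}/c_+$. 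Thus inversion of the second-order operator $\mathscr{L}$ is reduced to the successive solution of two first-order twisted cohomology equations of the form treated in Lemma~\ref{lm:twisted}.

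For the quasi-Newton step, given approximants $(v,c,\sigma,\lambda)$ with residuals $E:=\mathscr{E}[v,\sigma,\lambda]$ and $F:=\mathscr{F}[v,\sigma,c]$ of size $\epsilon'$ in $\|\cdot\|_\rho$, I first linearize $\mathscr{E}$ to get
\begin{equation*}
\mathscr{L}\,\Delta v + v\,\Delta\sigma + \Delta\lambda \;=\; -E + Q_{\mathscr{E}},
\end{equation*}
with $Q_{\mathscr{E}}$ the quadratic Taylor remainder from Proposition~\ref{composition}. Introducing $w := \Delta v_+ - \Delta v/c_+$ and using the approximate factorization (exact up to the remainder $(F/c_+)\,\Delta v$), the equation becomes the successive pair
\begin{equation*}
w - c\,w_- \;=\; -E - v\,\Delta\sigma - \Delta\lambda + R, \qquad \Delta v_+ - \Delta v/c_+ \;=\; w.
\end{equation*}
Both equations are instances of Lemma~\ref{lm:twisted}; I use the two scalar counterterms $\Delta\lambda$ and $\Delta\sigma$ to absorb the two cohomology obstructions, while the remaining one-parameter freedoms (the average of $w$ and of $\Delta v$) are spent enforcing the normalisation \eqref{normalization}. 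Once $(\Delta v,\Delta\sigma,\Delta\lambda)$ are determined, linearising $\mathscr{F}$ in $c$ and using $\mathscr{F}\approx 0$ yields $(1/c_+)\,\Delta c_+ - c_+\,\Delta c = \text{known}$, again a twisted cohomology equation solvable by Lemma~\ref{lm:twisted} under the non-degeneracy hypothesis~(H3).

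The convergence argument follows the standard Nash--Moser pattern. Combining the tame estimates \eqref{twist final estimate} with Proposition~\ref{composition} and the control on $R$ by $\|F\|\cdot\|\Delta v\|$, one step with domain loss $\delta$ yields
\begin{equation*}
\|E_{\mathrm{new}}\|_{\rho-\delta}+\|F_{\mathrm{new}}\|_{\rho-\delta} \;\leq\; C\,\delta^{-4\tau}\bigl(\|E\|_\rho+\|F\|_\rho\bigr)^{2}.
\end{equation*}
A geometric schedule $\delta_n = \delta_0/2^n$ with $\sum\delta_n \le s$ then delivers a Cauchy sequence in $\mathscr{A}_{\rho-s-\delta}$ whose limit $(v^{*},c^{*},\sigma^{*},\lambda^{*})$ satisfies \eqref{distance} with the bounds \eqref{distance2}. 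The composition hypothesis (H4) ensures that every iterate remains in the domain $\mathscr{D}$ of $W$ once $\epsilon^{*}$ is small enough, since the total displacement is geometrically summable. Local uniqueness is then obtained by applying the same invertibility and tame estimates to the difference of two putative solutions: the difference satisfies a linearised system whose right-hand side is quadratic in itself, and a standard contraction argument forces it to vanish. Analyticity in $\eta$ is preserved at every stage, since $W,\beta$ are analytic in $\eta$ and the constructive solutions furnished by Lemma~\ref{lm:twisted} respect analytic dependence of the coefficients.

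The principal obstacle I foresee is the careful bookkeeping of the \emph{approximate} nature of the factorisation: since $\mathscr{L} + (cT_{-\Omega}-1)(T_\Omega-1/c_+)$ is not zero but proportional to $F$, the remainder $R$ must be shown to be of order $\|F\|\cdot\|\Delta v\|$ rather than merely $\|F\|$. This is exactly why the iteration must be performed on the pair $(\mathscr{E},\mathscr{F})$ rather than on $\mathscr{E}$ alone: only when $F$ decreases quadratically alongside $E$ does the factorisation continue to supply tame control of $\mathscr{L}^{-1}$, and hence quadratic convergence of the whole scheme. A subsidiary bookkeeping difficulty, handled by the finite-dimensional implicit function theorem, is the simultaneous assignment of counterterms to kill cohomology obstructions \emph{and} enforcement of \eqref{normalization} consistently across iterates.
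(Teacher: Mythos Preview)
Your overall architecture is the same as the paper's: run a quasi-Newton iteration on the pair $(\mathscr{E},\mathscr{F})$, use the approximate factorization of $\mathscr{L}$ (with defect $\mathscr{F}/c_+$) to reduce the linearized equilibrium equation to two twisted cohomology equations, and close with a standard Nash--Moser argument. The factorization identity you wrote is correct.

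The genuine gap is in your allocation of counterterms. You spend \emph{both} $\Delta\lambda$ and $\Delta\sigma$ on the two cohomology obstructions coming from the factored equilibrium equation, and then assert that the linearized factorization equation $(1/c_+)\,\Delta c_+ - c_+\,\Delta c = \text{known}$ is ``solvable by Lemma~\ref{lm:twisted} under (H3)''. But Lemma~\ref{lm:twisted} does not say that a twisted cohomology equation is solvable; it says it is solvable \emph{after adjusting a scalar} $\lambda$. Under (H3) the two coefficients $1/c_+$ and $c_+$ are both close to $1$, so you are precisely in (or arbitrarily close to) the equal-average case where a genuine obstruction lives; with $\Delta\sigma$ already fixed you have nothing left to kill it, and the estimates degenerate. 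Relatedly, your bookkeeping ``the averages of $w$ and of $\Delta v$ are spent enforcing the normalisation'' is off: there is only one normalisation $\langle \Delta v\rangle=0$, so one free average must go to an obstruction, not to normalisation.

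The paper resolves this by the affine decoupling $\hat v = A + \hat\sigma B$, $\hat\lambda = G + \hat\sigma D$: for each piece $A,B$ the factored equilibrium equation is solved using only $G$ (resp.\ $D$) and the intermediate free average as counterterms, with $\langle A\rangle=\langle B\rangle=0$ enforced at the end. This leaves $\hat\sigma$ \emph{undetermined} by the equilibrium step, and it is then chosen precisely as the counterterm that makes the linearized factorization equation solvable; the non-degeneracy needed is that $\int(-c_+ - \partial_\beta^2 W\, c_+ B)\,d\psi \neq 0$, which follows from (H3). This triangular structure (equilibrium step parametrised by $\hat\sigma$, then $\hat\sigma$ fixed by the factorization step) is the missing mechanism in your proposal. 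A smaller point: your tame exponent $\delta^{-4\tau}$ in the quadratic estimate should be $\delta^{-8\tau}$, since $\|\hat v\|\lesssim \delta^{-4\tau}\epsilon$ and the new error is quadratic in $\hat v$.
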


The Theorem \ref{analytic KAM theorem}  is in an ``a-posteriori" format. Given an approximate solution which satisfies some non-degeneracy condition, we establish that there is a true solution nearby.

For the experts in KAM theory, we note that we have two parameters $s,\delta$ measuring the domain loss. The parameter $s$ measures the domain loss in the first step and the parameter $\delta$ measures the domain loss in the subsequent steps. The first step sets up several quantities which affect subsequent steps and are estimated in a different way from the other steps. The steps after the first are all very similar, and the estimates are very similar to the standard Nash-Moser scheme, even if the iterative step is very different.

To produce the approximate solution, we can use a variety of methods.

In the case that $W$ is small, we can take as approximate solution $(v^0, c^0, \sigma^0, \lambda^0) = (0,1,0,0)$ (the solution corresponding to $W=0$) and then (H1) just becomes smallness condition on $W$.

We can use the result of a Lindstedt series as approximate solutions. Then, we obtain a validation of the Lindstedt procedure, and a consequence for the (complex) differentiability of the solutions and the convergence of Lindstedt series (see Section \ref{sec:extensions}). See also \cite{SuZL15}.

Even if we will not discuss it in this paper, one could take as an approximate solution the outcome of a numerical computation and Theorem~\ref{analytic KAM theorem} gives a validation of the numerical result (provided that we check a few ``condition numbers"). Indeed, the proof leads to an efficient algorithm.

\section{Proof of Theorem \ref{analytic KAM theorem}}\label{sec:proof}
\subsection{Outline of the proof}\label{sec:outline of the proof}
The proof of Theorem \ref{analytic KAM theorem} is based on a quadratically convergent iterative method. The convergence of the method will be established by a Nash-Moser argument. Here we give an outline of the proof.

The linearization of the equilibrium equation is a linear second order difference equation. We will solve this linearized equilibrium equation using that it factorizes into two first order difference equations. In our case, we cannot write down the factorization in closed form (as in \cite{SuL1,Rafael'08}). So we will impose an auxiliary equation for the coefficients of the factorized equation. We will call this equation  the factorization equation. We will not need to solve exactly the factorization equation at each step, but we will require that there is an approximate solution of the factorization equation with an accuracy comparable to the accuracy of the approximate solution of the equilibrium equation. Then, we will derive an iterative method that improves both of the equilibrium equation and the factorization equation. This is reminiscent of the procedure to study an elliptic torus and its reducibility at the same time. See \cite{Moser'67, Poschel89}. Note without the parameter $\sigma$, we can not find coefficients satisfying the factorization equation.

Even with this novelty, we still have difficulty that the equilibrium equation and the factorization equation are coupled. We do not know how to establish convergence if we solve one equation after another. Therefore we will derive a way to solve the two coupled linearized equations simultaneously. After we factorize the linearized equilibrium equation, we will face three twisted cohomology equations (difference equations with non-constant coefficients), two for the linearized equilibrium equation and one for the linearized factorization equation. We will solve them using the technique in Section \ref{sec:motivation}. This completes one step of iteration.

One small technical complication is that the first step is estimated in a different way from the subsequent steps.
In the first step of the iteration, we compute some auxiliary function related to factorization (the auxiliary functions $\gamma$ related to the twisted cohomology equation) and estimate them from the original data. In subsequent steps, even if we compute the same auxiliary functions, we do not estimate them from the data but we estimate the change on the auxiliary functions induced by the (rather small) changes in the approximate solution.

Under some minor non-degeneracy conditions, we can repeat the process indefinitely. Finally, we will prove that the iterative procedure converges with suitably chosen domains under the analytic norm. This is very standard in KAM theory.

The local uniqueness will be obtained in Section \ref{sec:uniqueness} by showing that the linearized equation admits unique solutions using a standard technique. See \cite{CallejaL'10}.

We also note that the main part of Theorem \ref{analytic KAM theorem} has what is called ``a-posteriori" format in numerical analysis. We show that if there is an approximate enough solution that satisfies some non-degeneracy conditions, then, there is a true solution nearby. It is well known that this a-posteriori format leads to smooth dependence on parameters, bootstrap of regularity and several other consequences.

Finally, the analyticity with respect to parameters(we will use either $\epsilon$ or $\eta$ as parameters) will be a corollary of the existence of the perturbative expansions (see Section \ref{sec:perturbative}). Using the a-posteriori format of  Theorem \ref{analytic KAM theorem} and the local uniqueness, it follows that the solution is complex differentiable in the parameters. Hence, it is analytic on the parameters. Furthermore, since the perturbative expansions are the Taylor expansions of an analytic function they converge.

We note that the iterative process described above leads to an efficient algorithm, which we have formulated in Section \ref{sec:algorithm}.

\subsection{Motivation for the iterative step}\label{sec:motivation}
Our goal in this section is to devise a procedure that given an approximate solution produces another approximate 
solution with much smaller error and not much worse non-degeneracy conditions. This procedure is done for the equilibrium and factorization equations simultaneously.

\subsubsection{The equilibrium equation}
We consider the initial guess $(v^0,\sigma^0, \lambda^0)$ which solves equation \eqref{external force} with a small error $e$, where $||e||_{\rho}<\epsilon$, i.e.
\begin{equation}
  \label{eq:inv}
  v^0_++v^0_--2v^0+W((\psi,\eta)+\beta v^0)+\sigma^0v^0 + \lambda^0=e.
\end{equation}

The Newton procedure for the approximate solutions of the equilibrium equation \eqref{eq:inv} requires to find an update $(\hat{v},\hat{\sigma},\hat{\lambda})$ satisfying
\begin{equation}
  \label{eq:inv_lin}
  \hat{v}_++\hat{v}_--2\hat{v}+\partial_{\beta}W_{v^0}\hat{v}+\hat{\sigma}v^0+\sigma^0\hat{v}+\hat{\lambda}=-e,
\end{equation}
where we denote $\partial_{\beta}W=\beta\cdot \nabla W$ for simplicity and we use $W_{v}$ to indicate $W((\psi,\eta)+\beta v)$. 

\subsubsection{The factorization equation}
The equation \eqref{eq:inv_lin} is not easy to solve directly. Nevertheless, we impose that it can be factorized into two first order difference equations with non-constant coefficients. If we accomplish this, we can solve \eqref{eq:inv_lin} using the theory of twisted cohomology equations developed in Section \ref{sec:twisted}.

Therefore we want that the operator $\mathscr{L}$ (recall that we denote $\mathscr{L}=T_{\Omega}+T_{-\Omega}-2+\partial_{\beta}W_{v^0}+\sigma$) is factorized into two first order operators:
\begin{equation}    \label{original equilibrium equation}
  \mathscr{L}=\mathscr{A}_+\mathscr{A}_-,
\end{equation}
where
\begin{align}
  \label{eq:firstordereq}
  \mathscr{A}_+&=a(\psi)T_{\Omega}-b(\psi)\\
  \mathscr{A}_-&=c(\psi)-d(\psi)T_{-\Omega}
\end{align}
and $a,b,c,d$ are the new unknowns.

A direct calculation shows
\begin{equation}
  \mathscr{A}_+\mathscr{A}_-\hat{v}=a(\psi)c_+(\psi)\hat{v}_+-[a(\psi)d_+(\psi)+b(\psi)c(\psi)]\hat{v}+b(\psi)d(\psi)\hat{v}_-.
\end{equation}

Hence we want to choose $a,b,c,d$ satisfying the following equations
\begin{align}\label{eq:coeff}
  a(\psi)c_+(\psi)&=1\nonumber,\\
  -[a(\psi)d_+(\psi)+b(\psi)c(\psi)]&=-2+\partial_{\beta}W_{v^0}+\sigma^0,\\
  b(\psi)d(\psi)&=1.\nonumber
\end{align}

Note that the problem of factorization has always many solutions. If $\mathscr{A}_+\mathscr{A}_-$ is a solution and $g$ is any invertible function, then $\tilde{\mathscr{A}}_+\tilde{\mathscr{A}}_-$ is also a solution, where
  \begin{equation}
    \label{eq:change}
    \begin{split}
      \tilde{\mathscr{A}}_+&=\mathscr{A}_+g,\\
      \tilde{\mathscr{A}}_-&=g^{-1}\mathscr{A}_-.
    \end{split}
  \end{equation}
We can use this non-uniqueness to impose some extra normalization condition. In this paper, we will  take the normalization
\begin{equation}
  \label{eq:normalized coeff}
  b(x)=1.
\end{equation}

With condition \eqref{eq:normalized coeff} we can simplify the system (\ref{eq:coeff}) to (after eliminating $a$):
\begin{equation}
  (-c+2-\partial_{\beta}W_{v^0}-\sigma^0)c_+=1.
\end{equation}
This is how the operator $\mathscr{F}$ comes into play. Note it is a non-linear, non-local equation which we will have to solve iteratively.
By assumption (H1), we can solve this equation with an error $f$ using some initial guess $c^0$, i.e.
\begin{equation}\label{eq:factorization}
  (-c^0+2-\partial_{\beta}W_{v^0}-\sigma^0)c^0_+-1=f
\end{equation}
and we have $||f||_{\rho}<\epsilon$.

Then the Newton procedure for \eqref{eq:factorization} requires solving the following equation for $\hat{c}$:
\begin{equation}
  \label{eq:F}
  -c^0_+\hat{c}+(-c^0+2-\partial_{\beta}W_{v^0}-\sigma^0)\hat{c}_+-c^0_+\hat{\sigma}-\partial_{\beta}\partial_{\beta}W_{v^0} c^0_+\hat{v}=-f.
\end{equation}
Then $c+\hat{c}$ would be a more accurate solution.

We replace $\mathscr{L}$ with $\mathscr{A}_+\mathscr{A}_-$ in the original equilibrium equation \eqref{eq:inv_lin} to get
\begin{equation}
  \label{eq:I}
  \mathscr{A}_+\mathscr{A}_-\hat{v}+\hat{\sigma}v^0+\hat{\lambda}=-e.
\end{equation}

\begin{Remark}
  Note that the equation \eqref{eq:I} is slightly different from \eqref{eq:inv_lin}, so the $(\hat{v},\hat{\sigma},\hat{\lambda},\hat{c})$ we find is not exactly the solution of the linearized equation of \eqref{external force}. But the only difference between \eqref{eq:I} and \eqref{eq:inv_lin} is a term $f\hat{v}(c_+^0)^{-1}$, which we will show is quadratic in $e,f$ (since $\hat{v}$ will be of the order of $e$). 
\end{Remark}
\begin{Remark}
To obtain quadratic convergence, the full Newton step will have to improve both the equilibrium and the factorization equation at the same time. (A moment's reflection shows that improving one equation and then the other does not lead to quadratic convergence.)
We will prove our Newton procedure using \eqref{eq:I} instead of \eqref{eq:inv_lin} and improving the factorization still converges quadratically to a solution of \eqref{external force}.
\end{Remark}

In the following section, we will develop a method to solve \eqref{eq:I} and \eqref{eq:F} simultaneously. In Section \ref{sec:estimation of errors} we will obtain estimates for the size of the error. Then, we will show in Section \ref{sec:convergence} that they lead to an improved solution and that the process can be iterated and converges to a solution.

\subsection{Solving the linearized equations}\label{sec:solve the linearization}
Our goal in this section is to solve \eqref{eq:I} and \eqref{eq:F} simultaneously.

The difficulty arises because the equations \eqref{eq:I} and \eqref{eq:F} are coupled (the unknown $\hat{\sigma}$ appears in both of them). Observe that since $\hat{\sigma}$ appears in an affine way, we can guess that the solutions will be affine in $\hat{\sigma}$. This allows to uncouple the equations.

When we find the corrections $(\hat{v},\hat{\sigma},\hat{\lambda},\hat{c})$, we can write down the updated solution of the equilibrium equation \eqref{external force} and the factorization equation \eqref{eq:factorization}:
\begin{equation}\label{eq:improved solution}
(v^1,\sigma^1,\lambda^1,c^1)=(v^0+\hat{v},\sigma^0+\hat{\sigma},\lambda^0+\hat{\lambda},c^0+\hat{c}),
\end{equation}
which can be used in the next iterative step. We will show that the improved solution \eqref{eq:improved solution} is indeed more approximate (in a smaller domain). Then the procedure can be repeated indefinitely and converges to a solution if the errors are small enough.

Now we write
\begin{align}
  \label{eq:v_affine}
  \hat{v}&=A+\hat{\sigma}B,\\
  \hat{\lambda}&=G+\hat{\sigma}D,\label{eq:lambda_affine}
\end{align}
where $A$ and $B$ are functions, $G$ and $D$ are numbers. All these quantities will be determined shortly.

Then the equation (\ref{eq:I}) becomes
\begin{align}
  \label{eq:cohom_inv1}
  \mathscr{A}_+\mathscr{A}_-A+G&=-e,\\
  \label{eq:cohom_inv2}
  \mathscr{A}_+\mathscr{A}_-B+D&=-v^0.
\end{align}
The Newton equation for factorization \eqref{eq:F}  after substitution \eqref{eq:cohom_inv1} and \eqref{eq:cohom_inv2} becomes
\begin{equation}
   \label{eq:cohom_fac}
  -c^0_+\hat{c}+(-c^0+2-\partial_{\beta}W_{v^0}-\sigma^0)\hat{c}_++(-c^0_+-\partial_{\beta}\partial_{\beta} W_{v^0} c_+^0B)\hat{\sigma}=\partial_{\beta}\partial_{\beta}W_{v^0} c^0_+A-f.
\end{equation}

%Equations (\ref{eq:cohom_inv1}) and (\ref{eq:cohom_inv2}) are both composed by two cohomology equations with %nonconstant coefficients. We can solve them and find $A, B, G$, and $D$.
% Note $\langle A\rangle$ and $\langle B\rangle$ are both adjustable. Since we want to find a normalized solution, we %will impose $\langle \hat{v}\rangle=0$. To achieve this, we set $\langle A\rangle=\langle B\rangle=0$. 

Hence the Newton step improving simultaneously the invariance and factorization equations consists in solving \eqref{eq:cohom_inv1}, \eqref{eq:cohom_inv2}, \eqref{eq:cohom_fac}.
Notice that this system has an upper triangular structure, we can solve \eqref{eq:cohom_inv1}, \eqref{eq:cohom_inv2} for $A,B, G, D$ and then solve \eqref{eq:cohom_fac} for $\hat{c}$ and $\hat{\sigma}$. Once we have $\hat{\sigma}$, we can determine $\hat{v}$ and $\hat{\lambda}$ using \eqref{eq:v_affine} and \eqref{eq:lambda_affine}.

To solve \eqref{eq:cohom_inv1} and \eqref{eq:cohom_inv2}, we observe that each of them can be obtained by solving two cohomology equations with non-constant coefficients.
We can find the zero average solution of \eqref{eq:cohom_inv1} and \eqref{eq:cohom_inv2}. In fact, $G$ is determined so that $A$ has zero average and $D$ is determined so that $B$ has zero average. This ensures that 
$\langle \hat{v} \rangle =0$.

We note that the choice of the constants in the solution of two consecutive twisted cohomology equations is a simple extension of the arguments developed at the end of Section \ref{sec:twisted}.

In the case that $\mathscr{A}_+,\mathscr{A}_-$ both have different average coefficients, we see that the average of the solution as a function of $G$ is the composition of two affine functions, hence affine and if the linear part of each of them is not zero, we obtain that the composition of the two affine functions has non-zero derivative. 

In the case that $\mathscr{A}_+$ has equal average coefficient, but $\mathscr{A}_-$ does not, we choose $G$ so that $\mathscr{A}_+$ is solvable and choose the average of its solution so that the solution of $\mathscr{A}_-$ satisfies the normalization.

In the case that $\mathscr{A}_+$ has different average coefficient but $\mathscr{A}_-$ has the same average coefficient, we choose $G$ so that the solution produced by $\mathscr{A}_+$ satisfies the compatibility conditions for $\mathscr{A}_-$. Then, the solution of $\mathscr{A}_-$ is defined up to a constant which can be chosen in a unique way to satisfy the normalization condition.

In the case that both $\mathscr{A}_+,\mathscr{A}_-$ have the same average coefficients--which is the case that appears in standard KAM theory--we choose $G$ so that $\mathscr{A}_+$ is solvable. Then, we use the free parameter of the solution of $\mathscr{A}_-$ to ensure the solvability of $\mathscr{A}_-$ and use the free constant in the solution of $\mathscr{A}_-$ to adjust the normalization.

Therefore, we obtain the solution of \eqref{eq:cohom_inv1}. Similar procedure can be done for \eqref{eq:cohom_inv2}.

Now that we have solved \eqref{eq:cohom_inv1} and \eqref{eq:cohom_inv2} we turn to solving \eqref{eq:cohom_fac}.
As long as
\begin{equation}\label{transversal to factorization manifold}
 \int_{\mathbb{T}^{d-1} } \left(-c_+^0-\partial_{\beta}\partial_{\beta} W_{v^0} c_+^0B\right)\ d\psi \neq 0,
\end{equation}
which can be proved under some non-degeneracy conditions on the initial guesses, equation (\ref{eq:cohom_fac}) is a twisted cohomology equation. Provided that $-c^0_+$ and $-c^0+2-\partial_{\beta}W_{v^0}-\sigma^0$ are away from zero, we can apply Lemma \ref{lm:twisted} to solve (\ref{eq:cohom_fac}) after $A$ and $B$ are found.

To make the procedure clear, we will write the full algorithm in Section~\ref{sec:algorithm}. In Section \ref{sec:estimation}, we will carefully estimate the errors after one iterative step to show that the errors of the equilibrium and the factorization equation are both reduced quadratically after performing the corrections in the iterative step. Finally, we will prove the convergence rigorously in Section~\ref{sec:convergence}.
\subsection{Formulation of the iterative step}
\label{sec:algorithm}
\begin{alg}
  \begin{enumerate}
  \item [(1)] Given initial guesses $v^0,\sigma^0, \lambda^0, c^0$, set $a^0=(c^0_+)^{-1}$.
  \item [(2)] Calculate the errors $e=\mathscr{E}(v^0,\sigma^0,\lambda^0)$ and $f=\mathscr{F}(v^0,\sigma^0,c^0)$.
  \item [(3)]  Find $\gamma_{a^0}$ and $\bar{a}^0$ satisfying 
    \begin{equation*}
      \log a^0=\log \bar{a}^0+\log(\gamma_{a^0})_+-\log(\gamma_{a^0}).
    \end{equation*}
    Also find $\gamma_{c^0}$ and $\bar{c}^0$ satisfying 
    \begin{equation*}
      \log c^0=\log\bar{c}^0+\log(\gamma_{c^0})-\log(\gamma_{c^0})_-
    \end{equation*}

   \item [(4)] Compute $\bar{a}_+, \bar{b}_+, \gamma_{a_+}, \gamma_{b_+}$ (the average coefficients and the auxiliary functions) for $\mathscr{A}_+$ and $\bar{a}_-, \bar{b}_-, \gamma_{a_-}, \gamma_{b_-}$ for $\mathscr{A}_-$.

   \item [(5a)] If $\bar{a}_+\neq\bar{b}_+, \bar{a}_-\neq \bar{b}_-$, compute 
\[
\alpha= -\mathscr{A}_+^{-1} \mathscr{A}_-^{-1} e,\
\beta= -\mathscr{A}_+^{-1} \mathscr{A}_-^{-1} 1.
\]
                Set $G=-\frac{\langle \alpha \rangle}{\langle \beta \rangle}$. Set $A= \alpha + G\beta$.

   \item [(5b)] If $\bar{a}_+=\bar{b}_+, \bar{a}_-\neq \bar{b}_-$, choose $G= -\langle e\rangle$.
                 Set $\tilde{\alpha} = - \mathscr{A}_+^{-1}(e+ G)$ and $\langle \tilde{\alpha} \rangle =0$.
                 Set $\beta_1 = \mathscr{A}_-^{-1}\tilde{\alpha}, \beta_2= -\mathscr{A}_-^{-1} 1$. 
                 Set $A= \beta_1 - \beta_2 \frac{\langle \beta_1 \rangle}{\langle \beta_2 \rangle}$.

   \item [(5c)] If $\bar{a}_+\neq\bar{b}_+, \bar{a}_-= \bar{b}_-$, choose $\alpha_1 = - \mathscr{A}_+^{-1} e, \alpha_2 = - \mathscr{A}_+^{-1} 1$. 
Set $G= \frac{\langle \alpha_1 \rangle}{\langle \alpha_2 \rangle}$. 
Set $A=\mathscr{A}_-^{-1}(\alpha_1 - G\alpha_2)$ and $\langle A\rangle =0$.

   \item [(5d)]  If $\bar{a}_+ = \bar{b}_+, \bar{a}_-= \bar{b}_-$, choose $G= - \langle e\rangle$.
Set $\alpha=\mathscr{A}_+^{-1} (e- \langle e\rangle)$ and $\langle \alpha\rangle=0$.
Set $A= \mathscr{A}_-^{-1} \alpha$ and $\langle A\rangle =0$.

\begin{comment}
    \item Choose $G=-\langle e\gamma_{a^0}\rangle/\langle\gamma_{a^0}\rangle$, find $Y$ in
    \begin{equation*}
      \bar{a}^0(\gamma_{a^0}Y)_+-\gamma_{a^0}Y=(-e-G)\gamma_{a^0}
    \end{equation*}

   \item Set $\tilde{Y}=Y-\langle Y\gamma_{c^0}\rangle/\langle\gamma_{c^0}\rangle$. Find $A$ in
     \begin{equation*}
       \bar{c}\gamma_{c^0}A-(\gamma_{c^0}A)_-=\tilde{Y}(\gamma_{c^0})_-
     \end{equation*}
     where we set $\langle A\rangle=0.$ 
\end{comment}

   \item [(6)]  Find $B$ and $D$ in a similar way as we found $A$ and $G$. Also set $\langle B\rangle=0.$

   \item [(7)]  Find $\hat{\sigma}$ and $\hat{c}$ by solving (\ref{eq:cohom_fac}).
   \item [(8)]  Set $\hat{v}=A+\hat{\sigma}B$ and $\hat{\lambda}=G+\hat{\sigma}D$.
   \item [(9)]  Update $u,\lambda, \sigma$ and $c$ and repeat the steps.
  \end{enumerate}
\end{alg}

\begin{Remark}
  When we apply repeatedly the iterative steps to obtain estimates, it would be advantageous in Step 3 to use the information we have on $\gamma_a$ computed in the previous steps, because it leads to better estimates.
In the first step, we can only use Cauchy estimates on the initial data. See Section \ref{sec:outline of the proof}.
\end{Remark}

\begin{Remark}
The iterative method described above achieves
quadratic convergence (see Section \ref{sec:convergence}). It only entails performing algebraic operations among functions, composing them,
taking derivatives and solving the cohomology equations.

If we discretize a function by the values of the point in a grid of $N$ points and
 (redundantly) $N$ Fourier coefficients,
we note that each of the operations above requires $N$ operations either in the
grid representation or in the Fourier space representation. If we obtain either the Fourier or
the real space representation for a function, we can obtain the other representation using the
FFT algorithm that requires $N\log(N)$ operations.

Hence the method in Section~\ref{sec:proof} achieves quadratic convergence but no matrix inversion (or storage) is
required. It only requires $O(N)$ storage and $O(N \log(N))$ operations per step.

This has already been observed in \cite{CallejaL09} in the periodic case
(both for short range and for long range interactions)  and \cite{SuL1}
for the quasi-periodic non-resonant case. Numerical implementations in the
non-resonant periodic case were carried out in \cite{BlassL}.
We have not implemented the above algorithm, but we think it would be interesting to do so. 
\end{Remark}

\subsection{Estimates on the corrections}
\label{sec:estimation}
Denote $V^0=-c^0+2-\partial_{\beta}W_{v^0}-\sigma^0$. In the first step, we argue as follows. From \eqref{estimates on log}, it is clear that all the quantities $||\gamma_{c^0}||_{\rho-\delta/4},||\gamma^{-1}_{c^0}||_{\rho-\delta/4},||\gamma_{(c^0)^{-1}}||_{\rho-\delta/4},$ $||\gamma^{-1}_{(c^0)^{-1}}||_{\rho-\delta/4},||\gamma_{V^0}||_{\rho-\delta/4}$ and $||\gamma_{V^0}^{-1}||_{\rho-\delta/4}$ are bounded by some constant which depends only on $d,\tau,\kappa,M,s$. We denote the bound by $E$.

In subsequent steps, we will just assume that we have the bounds $E$ for the above auxiliary quantities. These bounds will be derived from the bounds in the first step.

Apply the estimate \eqref{twist final estimate} of twisted cohomology equations for equations (\ref{eq:cohom_inv1}) and (\ref{eq:cohom_inv2}), also recall the averages of $A$ and $B$ both vanish we have
\begin{align}
  ||A||_{\rho-\delta/2}&\le CE^4\delta^{-2\tau}||\tilde{e}||_{\rho}=C\delta^{-2\tau}||\tilde{e}||_{\rho},\\
  ||B||_{\rho-\delta/2}&\le CE^4\delta^{-2\tau}||\tilde{v}^0||_{\rho}=C\delta^{-2\tau}||\tilde{v}^0||_{\rho},\\
  |G| & \le CE^4||e||_{\rho}\le C\epsilon,\\
  |D| & \le CE^4||v^0||_{\rho}\le C.
\end{align}

Choose $m$ small, we can make $||B||_{\rho-\delta/2}<1$. Then
\begin{equation}
  ||(-c^0_+-\partial_{\beta}\partial_{\beta}W_{v^0}c^0_+B)^{-1}||_{\rho-\delta/4}\le (1-M_1)^{-1}(1-M_2)^{-1}.
\end{equation}
This guarantees $\int_{\mathbb{T}^{d-1}} \left(\right.-c^0_+-\partial_{\beta}  \partial_{\beta} W_{v^0}c^0_+B \left.\right) d\psi\neq 0$. Therefore the cohomology equation (\ref{eq:cohom_fac}) is solvable.

The estimate for (\ref{eq:cohom_fac}) gives
\begin{equation}\label{eq:estimate for sigma_hat}
  \begin{split}
    |\hat{\sigma}|&\le C(||A||_{\rho-\delta/2}+||f||_{\rho})\\
    &\le C\delta^{-2\tau}\epsilon+C\epsilon\\
    &\le C\delta^{-2\tau}\epsilon,
  \end{split}
\end{equation}
and
\begin{equation}\label{estimate for c_hat}
  \begin{split}
  ||\hat{c}||_{\rho-\delta}&\le CE^4\delta^{-\tau}(||A||_{\rho-\delta/2}+||f||_{\rho})\\
  &\le C\delta^{-3\tau}\epsilon.
  \end{split}
\end{equation}

From (\ref{eq:v_affine}), we have the estimate
\begin{equation}\label{eq:estimate for v_hat}
  \begin{split}
  ||\hat{v}||_{\rho-\delta/2}&\le ||A||_{\rho-\delta/2}+|\hat{\sigma}|||B||_{\rho-\delta/2}\\
  &\le C\delta^{-4\tau}\epsilon.
  \end{split}
\end{equation}

Also, from the estimates for $G,D$ and $\sigma$, we have
\begin{equation}\label{estimate for v_hat}
  |\lambda|\le C\epsilon+C\delta^{-2\tau}\epsilon\le C\delta^{-2\tau}\epsilon .
\end{equation}

\subsection{Estimates on the improved error}\label{sec:estimation of errors}
Next we estimate the new error for both equilibrium and factorization equation after one iteration step.

We need the Assumption (H4) to make sure the compositions can be done in appropriate domains. Recall that $\|W_{v^0}\|_{\mathscr{D}}<M$. Also note $\hat{v}$ is relatively smaller than $v^0$. From \eqref{estimate for v_hat}, we see that as long as $\epsilon$ is small enough the range of $v+\hat{v}$ is inside the domain of $W$. So we can choose proper $\xi$ such that $\dist(\mathscr{R}_{v^0},\partial\mathscr{D})\geq \xi>0$.

For the subsequent iterative step, we note that if $\delta^{-4\tau}\epsilon$ is small enough, $||\hat{v}||_{\rho-\delta}$ will also be small. So that under smallness condition on $\delta^{-4\tau}\epsilon$, we can ensure that $v+\hat{v}$ is well inside the domain of definition of $W$. The linear estimates are valid for all $\delta$'s, but in order to ensure that we can apply the non-linear estimates, we have to choose $\delta$'s (the domain loss) in such a way that $\delta^{-4\tau}\epsilon$ is small.

It is standard in KAM theory (and we will do it later in section \ref{sec:convergence}) that one can choose domain losses $\delta_n$ in such a way that the composition condition is met, such that  $\delta_n$'s go to zero fast enough, so there is still a domain left. Therefore, the composition will remain in the proper domain for all iterative steps and the procedure will converge in a non-trivial domain.

Using the Taylor expansions (see Proposition \ref{composition}) and the equations \eqref{eq:inv} and \eqref{eq:factorization} for the initial guesses, we have:
\begin{equation}
\begin{split}
      \mathscr{E}[v^0+&\hat{v},\sigma^0+\hat{\sigma},\lambda^0+\hat{\lambda}] \\
    =& \mathscr{E}[v^0,\sigma^0,\lambda^0]+\hat{v}_++\hat{v}_--2\hat{v}+\hat{\sigma}v^0+\sigma^0\hat{v}+\hat{\sigma}\hat{v}+\hat{\lambda}-W_{v^0}+W_{v^0+\hat{v}}\\
    =& e+\mathscr{A}_+\mathscr{A}_-\hat{v}-f\hat{v}-(\partial_{\beta}W_{v^0}+\sigma^0)\hat{v}+\hat{\sigma}v^0+\sigma^0\hat{v}+\hat{\sigma}\hat{v}+\hat{\lambda}\\
     &-W_{v^0}+W_{v^0+\hat{v}}\\
    =& -f\hat{v}+\hat{\sigma}\hat{v}+(W_{v^0+\hat{v}}-W_{v^0}-\partial_{\beta}W_{v^0}\hat{v}),
\end{split}
\end{equation}
and
\begin{equation}
\begin{split}
     \mathscr{F}[v^0+&\hat{v},\sigma^0+\hat{\sigma},c^0+\hat{c}] \\
  = & \mathscr{F}[v^0,\sigma^0,c^0]+[-(c^0+\hat{c})+2-\partial_{\beta}W_{v^0}-(\sigma^0+\hat{\sigma})]\hat{c}_+\\
&+[-\hat{c}-\partial_{\beta}W_{v^0+\hat{v}}+\partial_{\beta}W_{v^0}-\hat{\sigma}]c^0_+\\
  = & [-\hat{c}-(\partial_{\beta}W_{v^0+\hat{v}}-\partial_{\beta}W_{v^0})-\hat{\sigma}]\hat{c}_+\\
&-[(\partial_{\beta}W_{v^0+\hat{v}}-\partial_{\beta}W_{v^0})c^0_+-\partial_{\beta}\partial_{\beta}W_{v^0}c^0_+\hat{v}].
\end{split}
\end{equation}

Take $0<\delta<\rho$, we have
\begin{align}
  ||\mathscr{E}[v^0+\hat{v},&\sigma^0+\hat{\sigma},\lambda^0+\hat{\lambda}]||_{\rho-\delta}&\\\nonumber
\le & ||f\hat{v}||_{\rho-\delta}+||\hat{\sigma}\hat{v}||_{\rho-\delta}+||W_{v^0+\hat{v}}-W_{v^0}-\partial_{\beta}W_{v^0}\hat{v}||_{\rho-\delta}\\\nonumber
\le & ||f||_{\rho-\delta}||\hat{v}||_{\rho-\delta}+||\hat{\sigma}||_{\rho-\delta}||\hat{v}||_{\rho-\delta}+C||\hat{v}||^2_{\rho-\delta}.
\end{align}

Also,
\begin{align}
  ||\mathscr{F}[v^0+\hat{v},&\sigma^0+\hat{\sigma},c^0+\hat{c}]||_{\rho-\delta}&\\\nonumber
\le & (||\hat{c}||_{\rho-\delta}+C||\hat{v}||_{\rho-\delta}+||\hat{\sigma}||_{\rho-\delta})||\hat{c}||_{\rho-\delta}+C||\hat{v}||^2_{\rho-\delta}||c^0||_{\rho-\delta}.
\end{align}

Now it is clear the new errors are quadratic in $f,\hat{v},\hat{\sigma},\hat{c}$, multiplied by the domain loss to a negative power. This is what are called ``tame estimates'' in KAM theory.

The final estimates for both equilibrium and factorization equations on the updated solutions are
\begin{equation}
  ||\mathscr{E}[v^0+\hat{v},\sigma^0+\hat{\sigma},\lambda^0+\hat{\lambda}]||_{\rho-\delta}\le C\delta^{-8\tau}\epsilon^2
\end{equation}
and
\begin{equation}
  ||\mathscr{F}[v^0+\hat{v},\sigma^0+\hat{\sigma},c^0+\hat{c}]||_{\rho-\delta}\le C\delta^{-8\tau}\epsilon^2.
\end{equation}

Therefore we get updates for $c,v,\sigma$ and $\lambda$ which reduce the error of both the equilibrium equation and the factorization equation quadratically under the condition that all the bounds we derived before still hold for all iterative steps. We can prove the convergence following standard procedure, which we give in the following.

\subsection{Proof of convergence}
\label{sec:convergence}
The main effect of the iterative step is to reduce the error in \eqref{eq:I} and \eqref{eq:F}. But it could also deteriorate the constants in the non-degeneracy assumptions when it improves the solution.

Therefore the first goal of this section is to show that the constants in non-degeneracy assumptions of the solution do not deteriorate much and that the deterioration can be estimated by the error. In this case, as we will see, the convergence can be proved by choosing suitable domains (this is the choice we will do first) for each iterative step as is standard in KAM theory.

We will use 
subscript  $n$ to denote the quantities $\rho, \delta$ and $\epsilon$ after application of the iterative step $n$ times, while we use superscript $n$ for $v,c,\sigma,\lambda$ and $B$. We take
\begin{equation}
  \rho_0=\rho,\rho_1=\rho-s-\delta_0,\delta_n=\delta_0\cdot 2^{-n} \quad \text{and}\quad \rho_{n+1}=\rho_{n}-\delta_n.
\end{equation}

Denote $\epsilon_n=||\mathscr{E}(v^n,\delta^n,\lambda^n)||_{\rho_n}$ and $\tilde{\epsilon}_n=||\mathscr{F}(v^n,\sigma^n,c^n)||_{\rho_n}$. We will prove the following holds for all iterative steps by induction.
\begin{description}
\item[(B1)] All the quantities $||v^n||_{\rho_n},||c^n-1||_{\rho_n},|\sigma^n|,||W_{v^0}||_{\rho_n},||B^n||_{\rho_n},||\gamma_{c^n}||_{\rho_n},$\\$||\gamma_{c^n}^{-1}||_{\rho_n},||\gamma_{V^n}||_{\rho_n},||\gamma_{V^n}^{-1}||_{\rho_n}$, and $||(-c^0_+-\partial_{\beta}\partial_{\beta}W_{v^0}c^0_+B)^{-1}||_{\rho_n}$ are still bounded uniformly in $n$. To be precise, for any quantity $A^0<E$, we will prove $|A^n-A^{n-1}|<E\cdot2^{-n} ) $, therefore, $A^n<2E$ for any $n$.
\item[(B2)] Denote $\mathscr{R}_{v^n}=\{(\psi,\eta)+\beta v^n,(\psi,\eta)\in \mathbb{T}^d_{\rho_n}\}$. Then $\dist(\mathscr{R}_{v^n},\partial\mathscr{D})$ is also bounded by $\xi^*$.
\item[(B3)] $\epsilon_{n+1}\le (C\epsilon_0)^{2^{n+1}},\quad \tilde{\epsilon}_{n+1}\le (C\tilde{\epsilon}_0)^{2^{n+1}}$.
\end{description}
Note that as a consequence of (B3), and the choices of $\delta_n$, we obtain that the composition assumption holds
if $\epsilon_0$ is small enough.

The first step is already shown  in Section \ref{sec:estimation}. Now we assume the first $n$ steps are proved.

We first prove (B1). We only prove the bound
\begin{equation}
  \label{eq:bounds}
||\gamma_{c^n}-\gamma_{c^{n-1}}||_{\rho_n-\delta/4}<E\cdot2^{-n}.
\end{equation}
The proof of the other bounds is similar (up to changing the symbols). 

From \eqref{estimates on log}, we have
\begin{equation}\label{estimate for log_sigma_c}
  ||\log \gamma_{c^{n}}-\log \gamma_{c^{n-1}}||_{\rho_{n}-\delta_n/4}\leq C\delta_n^{-\tau}||\log c^{n}-\log c^{n-1}||_{\rho_n}.
\end{equation}

From estimate \eqref{estimate for c_hat} for $\hat{c}$, we have
\begin{equation}
  ||\hat{c}^{n}||_{\rho_{n}}\le C\delta_n^{-3\tau}\epsilon_n.
\end{equation}

As a result, we have
\begin{equation}\label{estimate for log_c}
  ||\log c^{n}-\log c^{n-1}||_{\rho_{n}}\le C\delta_n^{-3\tau}\epsilon_n.
\end{equation}

Combining \eqref{estimate for log_sigma_c} and \eqref{estimate for log_c}, we have
\begin{equation}
  ||\log \gamma_{c^{n}}-\log \gamma_{c^{n-1}}||_{\rho_{n}-\delta_n/4}\le C\delta_n^{-4\tau}\epsilon_n.
\end{equation}
Therefore we have (using that $\gamma_{c^{n}}$ are uniformly bounded)
\begin{equation}
  ||\gamma_{c^{n}}-\gamma_{c^{n-1}}||_{\rho_{n}-\delta_n/4}\le C\delta_n^{-4\tau}\epsilon_n.
\end{equation}

Note for (B2), we only need bounds for $v^n$, which is easy to prove.

Now recall $\epsilon_n<(C\epsilon_0)^{2^n}$. Therefore if we choose $\epsilon_0$ such that $\delta^{-4\tau}\epsilon_0$ is small enough, we can guarantee \eqref{eq:bounds} (i.e. (B2)). (B3) can be shown as follows
\begin{equation}
  \begin{split}
    \epsilon_{n}&\le C\delta_{n-1}^{-8\tau}\epsilon^2_{n-1}\\
    &=C\delta_0^{-8\tau}(2^{8\tau})^{n-1}\epsilon_{n-1}^2\\
    &\le (C\delta_0^{-8\tau})^{1+2+\cdots+2^{n-1}}(2^{8\tau})^{(n-1)+(n-2)\cdot 2+\cdots+1\cdot (n-1)}\epsilon_0^{2^{n}}\\
    &\le (C\delta_0^{-8\tau}2^{8\tau}\epsilon_0)^{2^{n}}\\
    &= (C2^{8\tau}\epsilon_0)^{2^{n}}.
  \end{split}
\end{equation}
Similar estimates also hold for $\tilde{\epsilon}_n$.

\subsection{Proof of local uniqueness}\label{sec:uniqueness}
  Suppose we have two solutions $(v_1,\sigma_1,\lambda_1,c_1)$ and $(v_2,\sigma_2,\lambda_2,c_2)$ of both the equilibrium and the factorization equations, which also satisfy the non-degeneracy conditions. We have
  \begin{equation}
    \mathscr{E}[v_1,\sigma_1,\lambda_1]=\mathscr{E}[v_2,\sigma_2,\lambda_2]=0.
  \end{equation}

Then we can write
\begin{equation}\label{identity to prove uniqueness}
  \begin{split}
      \mathscr{E}[v_2,\sigma_2,\lambda_2]&=\mathscr{E}[v_1,\sigma_1,\lambda_1]+D\mathscr{E}[v_1,\sigma_1,\lambda_1]\cdot (v_2-v_1,\sigma_2-\sigma_1,\lambda_2-\lambda_1)+R^2\\
      &=D\mathscr{E}[v_1,\sigma_1,\lambda_1]\cdot (v_2-v_1,\sigma_2-\sigma_1,\lambda_2-\lambda_1)+R^2=0,
  \end{split}
\end{equation}
where $R^2$ is the Taylor remainder for $\mathscr{E}$.

Another way to read this identity \eqref{identity to prove uniqueness} is to say that $v_1-v_1,\lambda_2-\lambda_1,\sigma_2-\sigma_1$ is a solution of the equation
\begin{equation}
  \label{eq:rq:I_2}
    \mathscr{A}_+\mathscr{A}_-(v_2-v_1)+(\sigma_2-\sigma_1)v^1+(\lambda_2-\lambda_1)=-R^2.
\end{equation}
Similarly, we have 
\begin{equation}
  \label{eq:rq:F_2}
    -c^1_+(c_2-c_1)+(-c^1+2-\partial_{\beta}W_{v^1}-\sigma^1)(c_2-c_1)_+-c^1_+(\sigma_2-\sigma_1)-\partial_{\beta}\partial_{\beta}W_{v^1} c^1_+(v_2-v_1)=-\tilde{R}^2.
\end{equation}
We have shown in Section \ref{sec:estimation} that the solutions of \eqref{eq:rq:I_2} and \eqref{eq:rq:F_2} satisfying the normalization condition are unique and satisfy the bounds \eqref{eq:estimate for sigma_hat} and \eqref{eq:estimate for v_hat}. So we have
\begin{equation}
  ||v_2-v_1||_{\frac{\rho}{2}}+||c_2-c_1||_{\frac{\rho}{2}}+|\sigma_2-\sigma_1|\leq C\rho^{-4\tau}||R^2||_{\rho}\leq C\rho^{-4\tau}(||v_2-v_1||_{\rho}+||c_2-c_1||_{\rho}+|\sigma_2-\sigma_1|)^2 .
\end{equation}
Since $||v_2-v_1||_{\rho}$, $||c_2-c_1||_{\rho}$ and $|\sigma_2-\sigma_1|$ are all small, we have
\begin{equation}
  \begin{split}
    ||v_2-v_1||_{\frac{\rho}{2}}\leq &C \rho^{-4\tau} ||v_2-v_1||^2_{\rho}\\
     \leq & C \rho^{-4\tau} ||v_2-v_1||_{\frac{\rho}{2}}||v_2-v_1||_{\frac{3\rho}{2}}.
  \end{split}
\end{equation}
The last inequality uses the interpolation inequality \eqref{interpolation inequalities}.

Therefore, as long as $\rho^{-4\tau}||v_2-v_1||_{\frac{3\rho}{2}}$ is sufficiently small (depending on the properties of the auxiliary function $c$ in the factorization equation), we have $||v_1-v_2||_{\frac{\rho}{2}}=0$, which also implies $\sigma_2=\sigma_1$ and $\lambda_2=\lambda_1$.

\section{Consequence of Theorem~\ref{analytic KAM theorem} and its
proof}\label{sec:extensions}
\subsection{Perturbative series around any solution}
\label{sec:perturbative}

In this section we present methods to compute formal power
series expansions. In Section~\ref{sec:convergenceseries} we will show that, under Diophantine
conditions on the frequency, they converge on a sufficiently small domain.

Assume:
\begin{description}
\item [{\bf A0}] We  are given a family of interaction potentials $W^\mu$
indexed
by an external parameter $\mu$ which can  be complex.
We assume that $W^\mu$ is analytic in both its arguments $\psi,\eta$ and the parameter $\mu$.

\item
[{\bf A1}] For some parameter $\mu_0$ we have a solution
$v^0, \sigma^0, \lambda^0 $ of $\eqref{external force}$.
We assume that $v^0 \in \mathscr{A}_\rho$ for some $\rho > 0$.

\item
[{\bf A2}] The operator $\mathscr{L}_{v^0}$, the linearization 
of the equilibrium equation  factorizes at $\mu_0$ into two first order operators. 

\end{description}

Our goal is to find a perturbative expansion of the solutions of \eqref{external force} in formal
power series of $\mu - \mu_0$. Later, we will show
that these perturbative expansions are convergent following
an argument of \cite{Moser'67} which is made much easier by the
a-posteriori format of Theorem~\ref{analytic KAM theorem}.  See Section~\ref{sec:convergenceseries}.
We seek
\begin{equation}\label{generalexpansion}
\begin{split}
& v^\mu = v^0 + \sum_{n > 0} (\mu - \mu_0)^n v^n \\
& \sigma^{\mu} = \sigma^0+\sum_{n > 0}(\mu-\mu_0)^n\sigma^n\\
& \lambda^\mu = \lambda^0 + \sum_{n > 0} (\mu - \mu_0)^n \lambda^n
\end{split}
\end{equation}
in such a way that, when we substitute it in \eqref{external force}
and expand (formally)  in powers of $(\mu - \mu_0)^n$ we obtain that the coefficients of same powers match.

Note that this generalizes the standard Lindstedt series, which is a particular case
of the expansion in the case that the family is just $W^\mu = \mu W$ and that we
expand near $\mu_0 = 0$.

If we substitute \eqref{generalexpansion}
in \eqref{external force}, expand in powers of
$\mu - \mu_0$, the coefficient of order $n$ has the form:
\begin{equation}\label{generalordern}
\begin{split}
&v^n_\eta(\psi + \Omega) +
v^n_\eta(\psi - \Omega)
+ v^n_\eta(\psi) (-2 + \partial_\beta W^{\mu_0}((\psi, \eta) ) +\sigma^0v^n+\sigma^nv^0+\lambda^n = R_n
\end{split}
\end{equation}
where $R_n$ is a polynomial expression in $v^0,\ldots, v^{n-1}$.

The main observation is that the equation \eqref{generalordern} is
precisely the Quasi-Newton equations \eqref{eq:I} which can be solved
by factorization. Note that we get the perturbative series to all
orders only assume that $W^{\mu}$ factorizes at $\mu=\mu_0$ and the
expansion series we get has $\sigma=0$ to all orders.

We note that to solve the cohomology equations without any quantitative
estimates, as shown in \cite{SuZL15}, it suffices to assume 
\begin{equation} \label{subexponential}
\lim_{N\to\infty} \frac{1}{N} \sup_{|\tilde{k}| \le N, m \in \mathbb{Z}} \bigg| \ln |\tilde{k}\cdot \Omega-m| \bigg|  = 0,
\end{equation}
which is weaker than Diophantine and, indeed weaker than Brjuno conditions. 

We have, therefore established the following:
\begin{lemma} \label{generalexpansionexists}
Under the assumptions {\bf A0}, {\bf A1}
and that $\Omega$ satisfies the quantitative conditions.

Then, we can find formal power series as in \eqref{generalexpansion}
such that for any
$N \in \mathbb{N}$ and any $\rho'$, $0 < \rho' < \rho$,  we have
\[
\left\| \mathscr{E}_\mu \left[ \sum_{n \le N} v^n (\mu -\mu_0)^n , \sum_{n \le N} \lambda^n (\mu -\mu_0)^n \right]\right\|_{\rho'}
\le C_{N,\rho'} |\mu - \mu_0|^{N+1}.
\]

Furthermore, it is possible to find
a  formal power series
that satisfies the normalization
\[
 \langle v^n_\eta \rangle = 0 \quad \quad n  =1,\ldots N.
\]
We refer to it as the \emph{``normalized perturbative expansion''}.

This normalized perturbative expansion is unique.
\end{lemma}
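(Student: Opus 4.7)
My plan is to prove Lemma \ref{generalexpansionexists} by induction on the order $n$ of the expansion. The base case $n=0$ is exactly assumption \textbf{A1}. For the inductive step, I would substitute the truncated ansatz \eqref{generalexpansion} into \eqref{external force} and formally expand $W^\mu((\psi,\eta)+\beta v^\mu)$ in powers of $\mu-\mu_0$, using both the analytic dependence of $W^\mu$ on $\mu$ (assumption \textbf{A0}) and the Faà di Bruno expansion in the composition variable. Collecting the coefficient of $(\mu-\mu_0)^n$ and separating the linear-in-$(v^n,\sigma^n,\lambda^n)$ part from the rest yields exactly equation \eqref{generalordern}, with $R_n$ a polynomial expression in $v^0,\dots,v^{n-1}$, $\sigma^0,\dots,\sigma^{n-1}$, $\lambda^0,\dots,\lambda^{n-1}$ and the Taylor coefficients of $W^\mu$ at $\mu_0$. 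Since these quantities are known by the inductive hypothesis, $R_n \in \mathscr{A}_\rho$ is determined.

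The key observation is that the left-hand side of \eqref{generalordern} is precisely $\mathscr{L}_{v^0} v^n + \sigma^n v^0 + \lambda^n$, which is the same linear operator that appears in the quasi-Newton step \eqref{eq:I} of the KAM iteration (read at $\mu = \mu_0$). By assumption \textbf{A2} the operator $\mathscr{L}_{v^0}$ factorizes as $\mathscr{A}_+\mathscr{A}_-$, so I would reduce \eqref{generalordern} to the triangular system analogous to \eqref{eq:cohom_inv1}--\eqref{eq:cohom_inv2} plus a scalar determination, and solve it by the machinery of Section \ref{sec:solve the linearization}: split $v^n = A^n + \sigma^n B^n$, $\lambda^n = G^n + \sigma^n D^n$, determine $(A^n,G^n)$ and $(B^n,D^n)$ from two pairs of twisted cohomology equations via Lemma \ref{lm:twisted}, and fix $\sigma^n$ from an equation analogous to \eqref{eq:cohom_fac} (the latter being trivial here since the factorization equation drops out at this formal level). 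The parameter count argument at the end of Section \ref{sec:twisted} --- that each twisted cohomology step consumes exactly as many scalars as it produces --- guarantees that the three unknowns $v^n,\sigma^n,\lambda^n$ are pinned down uniquely once we impose the single normalization $\langle v^n\rangle = 0$.

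For the quantitative estimate, once the coefficients $v^n,\sigma^n,\lambda^n$ are constructed for $n \le N$, I would substitute them back into $\mathscr{E}_\mu$ and expand. All terms of order $\le N$ cancel by construction, and the remainder is a finite sum of products of $(\mu-\mu_0)^k v^{k}$ terms together with Taylor remainders of $W^\mu((\psi,\eta)+\beta v^\mu)$ of order $N+1$. Bounding these on $D_{\rho'}$ using Proposition \ref{composition} and the Cauchy bounds for the analytic dependence in $\mu$ produces a constant $C_{N,\rho'}$ and the desired estimate $C_{N,\rho'}|\mu-\mu_0|^{N+1}$. Uniqueness of the normalized expansion is proved by a second induction: if two normalized expansions agree up to order $n-1$, their difference at order $n$ satisfies the homogeneous version of \eqref{generalordern} with zero average, and the parameter-matched solvability from Section \ref{sec:twisted} forces it to be zero.

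The part I expect to be the main obstacle is not the algebra but the bookkeeping for solvability at each order: I must verify that the four sub-cases (equal versus unequal average coefficients $\bar a_\pm,\bar b_\pm$ for $\mathscr{A}_+$ and $\mathscr{A}_-$) from Section \ref{sec:solve the linearization} really do use the freedom in $\sigma^n$, $\lambda^n$ and $\langle v^n\rangle$ consistently, so that at every order exactly one normalized solution exists. Once this parameter count is checked once and for all at the base linearization (which is fixed throughout the induction), the recursion proceeds uniformly in $n$, and the bound on $R_n$ depending only on lower orders keeps the scheme well-defined for every $N$.
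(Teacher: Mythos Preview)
Your approach is essentially the paper's own argument: substitute the ansatz, extract the order-$n$ equation \eqref{generalordern}, recognize it as the quasi-Newton equation \eqref{eq:I}, and solve it via the factorization $\mathscr{L}_{v^0}=\mathscr{A}_+\mathscr{A}_-$. Your induction, the remainder estimate via Proposition~\ref{composition}, and the uniqueness argument are spelled out in more detail than in the paper but follow the same line.

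One point deserves clarification. In this lemma only the equilibrium equation is being expanded; the factorization equation plays no role beyond supplying the fixed factorization at $\mu_0$ (assumption \textbf{A2}, which the paper uses in the discussion even though the lemma statement omits it). Consequently there is no order-$n$ analogue of \eqref{eq:cohom_fac} available to pin down $\sigma^n$, and your sentence ``fix $\sigma^n$ from an equation analogous to \eqref{eq:cohom_fac} (the latter being trivial here\dots)'' is ambiguous. The paper resolves this simply by taking $\sigma^n=0$ for all $n\ge1$ (``the expansion series we get has $\sigma=0$ to all orders''); then \eqref{generalordern} reduces to $\mathscr{A}_+\mathscr{A}_-v^n+\lambda^n=R_n$, and the two twisted cohomology solves together with the normalization $\langle v^n\rangle=0$ determine $(v^n,\lambda^n)$ uniquely. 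Your parameter-count discussion is correct once you make this choice explicit; without it, the equilibrium equation alone leaves one scalar freedom per order and the ``unique normalized expansion'' claim would not follow. The full coupling with \eqref{eq:cohom_fac} that you sketch is exactly what is needed for Lemma~\ref{generalexpansionexists2}, not for this lemma.
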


Similarly, we can obtain existence of perturbation expansions 
for both the equilibrium and the factorization equations. 
In analogy with \eqref{generalexpansion}, we seek expansions 

\begin{equation}\label{generalexpansion2} 
\begin{split}
& v^\mu = v^0 + \sum_{n > 0} (\mu - \mu_0)^n v^n \quad \quad
 \lambda^\mu = \lambda^0 + \sum_{n > 0} (\mu - \mu_0)^n \lambda^n  \\
& c^\mu = c^0 + \sum_{n > 0} (\mu - \mu_0)^n c^n \quad \quad
 \sigma^\mu = \sigma^0 + \sum_{n > 0} (\mu - \mu_0)^n \sigma^n  \\
\end{split}
\end{equation}
in such a way that the equilibrium and factorization equations are solved. 

The equations for order $n$ are 
\begin{equation}\label{ordernboth}
\begin{split} 
&v^n_++v^n_-+(-2+\partial_{\beta}W^{\mu_0}_{v^0}+\sigma^0)v^n+v^0\sigma^n+\lambda^n=R_n \\
&-c_+^0c^n+(-c^0+2-\partial_{\beta}W_{v^0}^{\mu^0}-\sigma^0)c^n_+-c_+^0\sigma^n-\partial_{\beta}\partial_{\beta}W_{v^0}^{\mu_0}c_+^0v^n=\tilde{R_n}.
\end{split}
\end{equation}
From the proof of the KAM theorem, it's clear these can be solved to all orders assuming that $W^{\mu}$ factorizes at $\mu=\mu_0$. The series we get do not have $\sigma=0$, which is different from the series when we do not require the factorization for all orders.

We have, therefore established the following:
\begin{lemma} \label{generalexpansionexists2}
Under the assumptions {\bf A0}, {\bf A1}, {\bf A2}
and that $\Omega$ satisfies the quantitative conditions.

Then, we can find formal power series as in \eqref{generalexpansion2}
such that for any
$N \in \mathbb{N}$ and any $\rho'$, $0 < \rho' < \rho$,  we have
\[
\left\| \mathscr{E}_\mu \left[ \sum_{n \le N} v^n (\mu -\mu_0)^n ,  \sum_{n \le N} \sigma^n (\mu -\mu_0)^n, \sum_{n \le N} \lambda^n (\mu -\mu_0)^n \right]\right\|_{\rho'}
\le C_{N,\rho'} |\mu - \mu_0|^{N+1}
\]
and
\[
\left\| \mathscr{F}_\mu \left[ \sum_{n \le N} v^n (\mu -\mu_0)^n ,  \sum_{n \le N} \sigma^n (\mu -\mu_0)^n, \sum_{n \le N} c^n (\mu -\mu_0)^n \right]\right\|_{\rho'}
\le C_{N,\rho'} |\mu - \mu_0|^{N+1}.
\]

Furthermore, it is possible to find
a  formal power series
that satisfies the normalization
\[
 \langle v^n_\eta \rangle = 0 \quad \quad n  =1,\ldots N.
\]
We refer to it as the \emph{``normalized perturbative expansion''}.

This normalized perturbative expansion is unique.
\end{lemma}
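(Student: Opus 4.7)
My plan is to proceed by induction on the order $n$, following the structure of the KAM Newton step developed in Section~\ref{sec:solve the linearization}. The base case $n=0$ is supplied by A1 together with A2: the exact factorization $\mathscr{L}_{v^0}=\mathscr{A}_+\mathscr{A}_-$ of the linearized equilibrium operator directly determines a base coefficient $c^0$ satisfying \eqref{modified factorization} at $\mu_0$ via the identification \eqref{eq:coeff} (with the normalization $b\equiv 1$). Suppose now that $(v^j,\sigma^j,\lambda^j,c^j)$ have been determined for $0\le j\le n-1$. Substituting the truncated series into $\mathscr{E}_\mu$ and $\mathscr{F}_\mu$, expanding analytically in $\mu-\mu_0$, and collecting the coefficient of $(\mu-\mu_0)^n$ produces exactly the system \eqref{ordernboth}, in which $R_n$ and $\tilde R_n$ are polynomial expressions in the previously determined coefficients together with the $\mu$-derivatives of $W^\mu$ at $\mu_0$.

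The key observation is that \eqref{ordernboth} coincides with the linear system \eqref{eq:cohom_inv1}--\eqref{eq:cohom_fac} solved by the Newton step of Theorem~\ref{analytic KAM theorem}, except that now all coefficients are frozen at the base point $(v^0,\sigma^0,c^0)$ and the factorization $\mathscr{L}_{v^0}=\mathscr{A}_+\mathscr{A}_-$ is exact (no residual $f$ appears). I would therefore apply the same procedure verbatim: split $v^n=A_n+\sigma^n B_n$ and $\lambda^n=G_n+\sigma^n D_n$ following \eqref{eq:v_affine}--\eqref{eq:lambda_affine}; solve the two decoupled equations for $(A_n,G_n)$ and $(B_n,D_n)$ by reducing each to two consecutive twisted cohomology equations via Lemma~\ref{lm:twisted}, choosing the counterterms $G_n,D_n$ so that $\langle A_n\rangle=\langle B_n\rangle=0$; then substitute the resulting affine expressions into the linearized factorization equation \eqref{eq:cohom_fac} and read off $\sigma^n$ from the average/solvability condition and $c^n$ as its unique zero-average solution. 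The Diophantine hypothesis on $\Omega$ (or, if only formal solvability is desired, the weaker condition \eqref{subexponential}) is what guarantees the twisted cohomology equations can be solved at each order. The normalization $\langle v^n\rangle=0$ is automatic since $v^n=A_n+\sigma^n B_n$ is a combination of zero-average functions.

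The remainder bounds on $\mathscr{E}_\mu$ and $\mathscr{F}_\mu$ are then a consequence of the analyticity of $W^\mu$ jointly in $(\psi,\eta,\mu)$ together with the fact that, by construction, the Taylor coefficients of the two functionals in $\mu-\mu_0$ vanish identically up to and including order $N$; the standard analytic remainder estimate on $\mathscr{A}_{\rho'}$ supplies the quantitative bound $C_{N,\rho'}|\mu-\mu_0|^{N+1}$. Uniqueness of the normalized expansion is proved order-by-order: if two normalized expansions agreed through order $n-1$, their differences at order $n$ would satisfy the homogeneous version of \eqref{ordernboth} subject to $\langle v^n\rangle=0$, and the parameter-count discussion following Lemma~\ref{lm:twisted} shows that the homogeneous system paired with the normalization admits only the trivial solution.

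The main obstacle I foresee is that the step producing $\sigma^n$ and $c^n$ requires the non-degeneracy condition \eqref{transversal to factorization manifold}, evaluated at the base point $(v^0,\sigma^0,c^0)$ and with $B=B_n$ the affine coefficient computed at order $n$. This condition must hold at \emph{every} order for the induction to continue, and a priori one only controls it at $n=0$. I would address this by observing that the averaged quantity in \eqref{transversal to factorization manifold} depends continuously on $B_n$ and that $B_n$ itself depends only on the base data $(v^0,\sigma^0,c^0)$ through the twisted cohomology operators $\mathscr{A}_\pm^{-1}$ applied to $v^0$, so the transversality reduces to a single non-degeneracy hypothesis on the base point which can be absorbed into A2 (exact factorization together with non-vanishing of the relevant average); once this is granted, the inductive construction goes through without further obstruction.
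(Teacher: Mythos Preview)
Your proposal is correct and follows essentially the same approach as the paper: the paper's argument is simply to write down the order-$n$ equations \eqref{ordernboth}, observe that they are precisely the linearized Newton equations solved in Section~\ref{sec:solve the linearization}, and conclude. Your version is considerably more explicit, and in particular your treatment of the transversality condition \eqref{transversal to factorization manifold} is sharper than the paper's: you correctly note that the auxiliary function $B_n$ solves $\mathscr{A}_+\mathscr{A}_- B_n + D_n = -v^0$ with all data frozen at the base point, so $B_n$ is in fact the \emph{same} function $B$ at every order and the transversality reduces to a single condition on $(v^0,\sigma^0,c^0)$---a point the paper leaves implicit in its one-line appeal to the KAM proof.
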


The above results could be interpreted in a more geometric way. We have proved convergence for the perturbative expansion involving also the counterterms that ensure the factorization. Given a potential $W$ small, the main result shows that we can find a locally unique counterterm $\sigma$, which is a functional on $W$ so that the equation factorizes.
The set $\{W(v) + \sigma(W) v\}$ can be interpreted as a codimension 1 manifold (the factorization manifold) in the space of potentials.

The perturbative expansions in Lemma \ref{convergence of the formal expansion} can be interpreted as the perturbative expansions for a path of potentials indexed by $\epsilon$ but requiring that $W$ stays on the factorization manifold.

In this geometric interpretation, the condition \eqref{transversal to factorization manifold} that among $\sigma$ we can achieve the factorization can be interpreted geometrically as saying that the direction (in the space of potentials) given by $\sigma v$ are transversal to the factorization manifold $F$. Note that this factorization manifold may depend on $\omega$.

\subsection{Convergence of Lindstedt series for the equilibrium and factorization equations}
\label{sec:convergenceseries}

\begin{theorem}\label{convergence of the formal expansion}
Assume that the conditions of Lemma~\ref{generalexpansionexists2} hold
and that $\Omega$ is Diophantine. Then, the normalized  formal series
obtained in Lemma~\ref{generalexpansionexists2}
is convergent.
\end{theorem}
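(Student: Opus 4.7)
The plan is to deduce convergence from the a-posteriori Theorem~\ref{analytic KAM theorem} combined with the local uniqueness statement, following the strategy of \cite{Moser'67}. The rough idea is that the truncations of the formal series provide approximate solutions good enough to fall in the domain of applicability of the KAM theorem; the KAM theorem then produces true, analytic-in-$\mu$ solutions; and local uniqueness forces these analytic solutions to agree with the formal series term by term.

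First I would fix $\rho' \in (0,\rho)$ and, for each integer $N$ and each $\mu$ in a small complex neighborhood $U$ of $\mu_0$, consider the truncated expansions
\[
v^{\le N}_\mu := v^0 + \sum_{n=1}^{N}(\mu-\mu_0)^n v^n, \qquad c^{\le N}_\mu := c^0 + \sum_{n=1}^{N}(\mu-\mu_0)^n c^n,
\]
and similarly for $\sigma^{\le N}_\mu$ and $\lambda^{\le N}_\mu$. By Lemma~\ref{generalexpansionexists2}, these quadruples solve the equilibrium and factorization equations with errors bounded by $C_{N,\rho'}|\mu-\mu_0|^{N+1}$ in the norm $\|\cdot\|_{\rho'}$. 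Since the non-degeneracy hypotheses (H1)--(H4) of Theorem~\ref{analytic KAM theorem} hold at $\mu_0$ by assumption \textbf{A1}, \textbf{A2}, they continue to hold for the truncations provided $\mu$ lies in a (possibly smaller) neighborhood $U' \subset U$, by continuity of the relevant quantities in $\mu$.

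Next I would choose $N$ large enough (for instance $N=1$ suffices once $U'$ is shrunk so that $|\mu-\mu_0|^{2}$ beats $\delta^{4\tau}\epsilon^*$ with $\delta$ fixed as a function of $\rho,\rho'$). The KAM theorem then produces, for each $\mu\in U'$, a true solution $(v^*_\mu,c^*_\mu,\sigma^*_\mu,\lambda^*_\mu)$ of the equilibrium and factorization equations such that
\[
\|v^*_\mu - v^{\le N}_\mu\|_{\rho'} \le C\, |\mu-\mu_0|^{N+1}, \qquad \text{and similarly for } c^*,\sigma^*,\lambda^*.
\]
The a-posteriori format together with local uniqueness guarantees that $\mu\mapsto (v^*_\mu,c^*_\mu,\sigma^*_\mu,\lambda^*_\mu)$ is complex-differentiable, hence analytic, in $\mu$ on $U'$; this is exactly the dependence-on-parameters argument sketched in Section~\ref{sec:outline of the proof}.

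Finally I would identify the formal series with the Taylor series of the analytic solution. Because $v^*_\mu$ and $c^*_\mu$ are analytic in $\mu$, they admit convergent Taylor expansions $v^*_\mu = \sum_{n\ge 0}(\mu-\mu_0)^n V^n$, $c^*_\mu = \sum_{n\ge 0}(\mu-\mu_0)^n C^n$ in $\mathscr{A}_{\rho'}$, and similarly for $\sigma^*$ and $\lambda^*$. Substituting these convergent expansions into the equilibrium and factorization equations and matching coefficients order by order, the Taylor coefficients satisfy exactly the recursion \eqref{ordernboth} that defines the formal series, and the normalization $\langle v^*_\mu\rangle = 0$ forces $\langle V^n\rangle = 0$ for all $n\ge 1$. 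The uniqueness statement in Lemma~\ref{generalexpansionexists2} then implies $V^n = v^n$, $C^n = c^n$, and so on, for every $n$. Hence the formal series coincides with the convergent Taylor series of the analytic solution and is therefore itself convergent in $\mathscr{A}_{\rho'}$ on $U'$.

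The main obstacle I anticipate is purely bookkeeping rather than conceptual: one must verify that the non-degeneracy quantities appearing in the hypotheses of Theorem~\ref{analytic KAM theorem} (the bounds on $c$, $\sigma$, $W_v$, the composition condition, and the Diophantine condition on $\Omega$) remain uniformly valid for the truncations $v^{\le N}_\mu$ throughout a complex neighborhood of $\mu_0$, so that the KAM theorem applies with constants independent of $N$ and uniform in $\mu\in U'$. This is standard once one uses analyticity of $W^\mu$ in $\mu$ (assumption \textbf{A0}) and continuity of the norms, together with a suitably small choice of $U'$.
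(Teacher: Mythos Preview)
Your proposal is correct and follows essentially the same route as the paper: both implement Moser's strategy of using truncations of the formal series as approximate solutions, applying the a-posteriori Theorem~\ref{analytic KAM theorem} together with local uniqueness to obtain a true solution analytic in $\mu$, and then identifying its Taylor coefficients with the normalized formal series via the uniqueness clause of Lemma~\ref{generalexpansionexists2}. The only point the paper makes more explicit is the analyticity step itself: rather than invoking ``the dependence-on-parameters argument'' as a black box, it verifies complex differentiability at \emph{every} $\tilde\mu$ in the ball (not only at $\mu_0$) by redoing the first-order expansion based at $\tilde\mu$, applying KAM plus uniqueness there, and then appealing to Cauchy--Goursat.
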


Note that a particular case of the above result is the convergence of the Lindstedt series
when the frequency is Diophantine.

We need the frequency to be Diophantine because, as we will see, the proof
uses repeatedly the KAM theorem. It seems quite possible that for the frequencies
that satisfy \eqref{subexponential} but not \eqref{Diophantine}, for many perturbations, it is
possible to obtain perturbative expansions to all orders, which nevertheless do not converge.

\begin{proof}
Without loss of generality, we assume that $v^0$ satisfies \eqref{normalization}.

Using the KAM theorem (Theorem \ref{analytic KAM theorem}),
we obtain that for all $\mu$ in a small ball centered on $\mu_0$ there exists a (unique) normalized
solution. (It suffices to  note that $v^0$ is a sufficiently approximate solution for
all $\mu$ close to $\mu_0$.)

Now, given any $\tilde \mu$ in this small ball, we can obtain a perturbative expansion
in powers of $\mu - \tilde \mu$.

We consider the first term of the expansion $v^1_{\tilde \mu}$.
We remark that it will be uniformly bounded in $|| \cdot||_{\rho'}$.

We note that because
\[
||\mathscr{E}_\mu[ v^0_{\tilde \mu} + (\mu - \tilde \mu) v^1_{\tilde \mu}, \lambda^0_{\tilde \mu}  + (\mu - \tilde \mu) \lambda^1_{\tilde \mu} ]||_{\rho'} \le
C |\mu - \tilde \mu |^2
\]
we can apply the KAM theorem (note that the non-degeneracy conditions of
the KAM theorem are satisfied with uniform bounds when the ball is
considered small enough)  and obtain that there is a normalized  solution $v^*_\mu, \lambda^*_\mu$
of \eqref{external force} for any value of $\mu$ in a ball around $\tilde \mu$ and that it
satisfies
\begin{equation}\label{newsolution}
\begin{split}
&|| v^*_\mu - v^0_{\tilde \mu} + (\mu - \tilde \mu) v^1_{\tilde \mu} ||_{\rho''} \le C|\mu - \tilde\mu|^2\\
&|| \lambda^*_\mu - \lambda^0_{\tilde \mu} + (\mu - \tilde \mu) \lambda^1_{\tilde \mu} ||_{\rho''} \le C|\mu - \tilde\mu|^2.
\end{split}
\end{equation}

Using the uniqueness obtained in the KAM we obtain that $v^*_\mu = v_\mu^0$, $\lambda^*_\mu = \lambda^0_\mu$.
Hence, \eqref{newsolution} means that $v^1_{\tilde \mu} $ is the derivative  at $\mu = \tilde \mu$
of the mapping that
to $\mu$ associates $v^0_\mu, \lambda^0_\mu$ if we give $v$ the topology in $\mathscr{A}_{\rho''}$.

We recall that the Cauchy-Goursat theorem shows that any complex function which is differentiable
at every point, is analytic \cite{Ahlfors}. This argument also works for functions taking values in Banach
spaces.
Alternatively, it  is not difficult to show that the mapping $\mu \mapsto v^1_\mu$ is continuous
(a quick way is to show that the graph of the map is closed because of the uniqueness
and that, since it uniformly bounded, it is compact by Montel's theorem \cite{Ahlfors}).

Once we have that the function is analytic, we know its Taylor series converges, but the Taylor series
has to be the one given by the formal series expansion.
\end{proof}

Notice as a corollary of the dependence on parameters we can obtain that the solution is analytic in the parameter $\eta$.

Since in the physical applications $\eta\in\mathbb{T}^1$ is important to discuss the periodicity in $\eta$ of the solutions thus obtained, we remark that , if we start with an approximate solution which is periodic in $\eta$, we will obtain a solution which is also periodic in $\eta$. 

This can be seen in two ways. One can observe that , applying Theorem \ref{analytic KAM theorem} we can obtain solutions in small enough intervals of $\eta$. They will be analytic in these small intervals and therefore they give a global analytic solution. Furthermore, we observe that if the approximate solution corresponding to $\eta=1$ is close to the solution corresponding to $\eta=0$, they have to agree because of uniqueness. Hence, we obtain that the solution is periodic.

We could also argue that, the proof is based on  an iterative step and that, by examining the proof, all the steps preserve the periodicity in $\eta$ of the approximate solutions. Hence, the KAM procedure that we describe for a fixed $\eta$ lifts to a procedure for periodic functions of $\eta$.

The lifting of the problem to a space of functions of $\eta$ also gives a direct proof of smooth dependence on parameters.

\section*{Acknowledgements}
We thank Professor T. Blass for discussions.
R. L. and L. Z.  have  been supported by DMS-1500943.
The hospitality of
JLU-GT Joint institute for Theoretical Sciences for the three authors
was instrumental in finishing the work. 
 R.L also acknowledges the hospitality of
the Chinese Acad. of Sciences.
X. Su is supported by both National
Natural Science Foundation of China (Grant No. 11301513) and ``the Fundamental Research Funds for the Central Universities".
\bibliographystyle{alpha}
\bibliography{reference}
\end{document}